\newtheorem{thm}{Theorem}[section]
\newtheorem{lem}[thm]{Lemma}
\newtheorem{prop}[thm]{Proposition}
\newtheorem{cor}[thm]{Corollary}
\newtheorem{rmk}[thm]{Remark}
\newtheorem{dfn}[thm]{Definition}
\theoremstyle{remark}
\renewcommand{\Re}{\text{Re}}
	\newcommand{\R}{\mathbb{R}}			
	\newcommand{\N}{\mathbb{N}} 			
	\newcommand{\C}{\mathbb{C}}			
	\newcommand{\Z}{\mathbb{Z}}				
	\newcommand{\Q}{\mathbb{Q}}       
	\newcommand{\abs}[1]{\left|#1\right|} 		
	\newcommand{\sabcd}[4]{\left(\begin{smallmatrix}#1&#2\\#3& #4\end{smallmatrix}\right)}
\DeclareMathOperator{\SL}{SL}
	\title{Products of Eisenstein series and Fourier expansions of modular forms at cusps}
	\author{Martin Dickson and Michael Neururer}
\begin{document}

\begin{abstract} We show, for levels of the form $N = p^a q^b N'$ with $N'$ squarefree, that in weights $k \geq 4$ every cusp form $f \in \mathcal{S}_k(N)$ is a linear combination of products of certain Eisenstein series of lower weight.  In weight $k=2$ we show that the forms $f$ which can be obtained in this way are precisely those in the subspace generated by eigenforms $g$ with $L(g, 1) \neq 0$. As an application of such representations of modular forms we can calculate Fourier expansions of modular forms at arbitrary cusps and we give several examples of such expansions in the last section.\end{abstract}

\maketitle

\section{Introduction}\label{sctn:introduction}

The space $\mathcal{M}_k(N,\chi)$ of modular forms of level $\Gamma_0(N)$, weight $k$ and nebentypus $\chi$ splits into the direct sum of the Eisenstein subspace $\mathcal{E}_k(N,\chi)$ and the space of cusp forms $\mathcal{S}_k(N,\chi)$.  It is straightforward to compute Fourier expansions and Hecke eigenforms in the Eisenstein subspace, but the space of cusp forms is far more mysterious, and any method of generating cusp forms is therefore of great interest.  In this article we examine one of the simplest methods of generating cusp forms: What is the subspace of $\mathcal{S}_k(N,\chi)$ generated by (the cuspidal projection of) products of Eisenstein series of lower weight?\\

For $N=1$ the answer to this question is very well-known: the graded ring\begin{footnote}{When $\chi = \mathbf{1}_N$ is the principal character modulo $N$ we write $\mathcal{M}_k(N)$ for $\mathcal{M}_k(N, \mathbf{1}_N)$.}\end{footnote} $\oplus_{k \geq 0} \mathcal{M}_k(1)$ is a polynomial ring with two generators, one in degree four and one in degree six, corresponding to the Eisenstein series $E_4$ and $E_6$. This means that every cusp form of level $N=1$ is a linear combination of products of Eisenstein series.  However the number of products required to form the monomials in $E_4$ and $E_6$ for these linear combinations grows linearly with the weight $k$, which means these monomials are rather complicated.  It is therefore natural to ask whether one can have simpler products at the expense of taking more Eisenstein series.  Pushing this to the extreme we are led to ask: What is the subspace of $\mathcal{M}_k(N, \chi)$ generated by Eisenstein series and products of \textit{two} Eisenstein series?\\ 

Using the Rankin--Selberg method (as observed in \cite{Zagier1977} \S5) one can show that, for $k \geq 8$, $\mathcal{M}_k(1)$ can be generated by the products $E_l E_{k-l}$ for $4 \leq l \leq k-4$. Similar statements are known to hold for $\mathcal{M}_k(p)$ for $p$ prime and $k \geq 4$ (see Imago\={g}lu--Kohnen \cite{ImamogluKohnen2005} for $p=2$, Kohnen--Martin \cite{KohnenMartin2008} for $p>2$). The most complete result in this direction was found by Borisov--Gunnells in \cite{BorisovGunnells2001toricvarieties}, \cite{BorisovGunnells2001}, and \cite{BorisovGunnells2003}. They show that for weights greater than two and any level $N\geq 1$ the whole space $\mathcal{M}_k(\Gamma_1(N))$ is generated by $\mathcal{E}_k(\Gamma_1(N))$ and products of two \textit{toric} Eisenstein series $\tilde{s}_{a/N}^{(k)}$ for $a\in\{0,\ldots,N-1\}$, while for $k=2$ one only obtains a subspace, $\mathcal{S}_{2,\mathrm{rk}=0}(N)+\mathcal{E}_k(N,\chi)$, where $\mathcal{S}_{2,\mathrm{rk}=0}(N)$ is defined below. 

The main application we want to present is the calculation of Fourier expansions at arbitrary cusps. While the toric Eisenstein series of Borisov--Gunnells have remarkably simple rational Fourier expansions at $\infty$, the Fourier expansions at other cusps are harder to obtain. This led us to consider instead the well-studied Eisenstein series
\begin{equation}\label{eqn:eis-series-def}
E_{l}^{\phi,\psi}(z)=e_l^{\phi,\psi} + 2\sum_{n\geq 1}
\sigma_{l-1,\phi,\psi}(n)q^n\in\mathcal{M}_l(M,\phi\psi),
\end{equation}
where $q = e^{2\pi i z}$, $\phi$ and $\psi$ are primitive characters of level $M_1$ and $M_2$, $M_1M_2=M\mid N$, $\sigma_{l-1, \phi, \psi}(n) = \sum_{d \mid n}\phi(n/d)\psi(d)d^{l-1}$, and the constant term $e_{l}^{\phi, \psi}$ (either zero or a value of a Dirichlet $L$-function) is given in Section \ref{sctn:generating-cusp-forms}. The advantage of working directly with the Eisenstein series in \eqref{eqn:eis-series-def} is that their Fourier expansions at cusps other than $\infty$ are comparatively easy to obtain and were explicitly calculated by Weisinger \cite{Weisinger1977} (we use a corrected version by Cohen \cite{Cohen2017unpublished}).

Before we describe our results we mention a different, rather general recent result by Raum \cite{Raum2016}: Let $k \geq 8$ be an integer, let $\rho$ be a representation of $\SL_2(\Z)$ on a complex vector space $V$ such that $\ker(\rho)$ contains a congruence subgroup, and define $\mathcal{M}_k(\rho)$ to be the space of $V$-valued functions transforming as modular forms for the automorphy factor $\gamma \mapsto (cz + d)^{-k} \rho(\gamma^{-1})$.  Then
\begin{equation}\label{eqn:raums-generators}
\mathcal{M}_k(\rho) = \mathcal{E}_k(\rho)+\text{span}_{\phi:\rho_M\otimes\rho_{M'}\rightarrow \rho}
\left(T_M E_l\otimes T_{M'}E_{k-l}\right),
\end{equation}
where $4 \leq l \leq k-4$, $\rho_M$ is the permutation representation on $\Gamma_0(M) \backslash \SL_2(\Z)$, the $E_k$ are corresponding vector-valued Eisenstein series, and the $T_M$ are certain natural vector-valued Hecke operators.

In order to state our main theorem, when the character $\chi$ is trivial, let us first define the space of products more precisely. Let $B_d$ be the lifting operator that associates to a modular form $f$ of weight $k$ the form $f|B_d(\tau) = d^{\frac{k}{2}}f(d\tau)$. Let $k \geq 2$ be even and fix a positive integer $N$. We then define $\mathcal{Q}_k(N) \subset \mathcal{M}_k(N)$ to be the subspace generated by all products of the form
\[
E_{l}^{\phi, \psi}|B_{d_1d}\cdot E_{k-l}^{\overline{\phi}, \overline{\psi}} | B_{d_2d}
\]
that lie in $\mathcal{M}_k(N)$ with the additional condition that $d_1 M_1$ divides the squarefree part of $N$. In other words $l\in\{1,\ldots,k-1\}$, $\phi$ and $\psi$ are primitive characters modulo $M_1,M_2$ with $\phi \psi(-1) = (-1)^l$, and $d_1,d_2$ are integers such that $d_1 M_1 d_2 M_2d \mid N$. We only need to require $(\phi, \psi, l) \neq (\mathbf{1}, \mathbf{1}, 2), (\mathbf{1}, \mathbf{1}, k-2)$, where $\textbf{1}$ is the trivial character, since these choices do not produce modular forms. The additional condition on $d_1 M_1$ implies that $(d_1 M_1, d_2 M_2 d) = 1$. Our main result is:

\begin{thm}\label{intro:prods-full-space}
Let $k \geq 4$ be even.  Let $N=p^a q^b N'$ where $p$ and $q$ are primes, $a, b \in \Z_{\geq 0}$, and $N'$ is squarefree. Then the restriction of the cuspidal projection to $\mathcal{Q}_k(N)$ is surjective, i.e.
\[
\mathcal{M}_k(N) =  \mathcal{Q}_k(N)+\mathcal{E}_k(N).
\]
\end{thm}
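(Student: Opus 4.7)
The plan is to prove surjectivity by duality. Equipping $\mathcal{M}_k(N)$ with the Petersson inner product, $\mathcal{S}_k(N)$ is the orthogonal complement of $\mathcal{E}_k(N)$, so the theorem amounts to the statement: any $f \in \mathcal{S}_k(N)$ with $\langle f, P \rangle = 0$ for every product $P \in \mathcal{Q}_k(N)$ must vanish. I would attack this by computing $\langle f, P\rangle$ via the Rankin--Selberg method and then invoking non-vanishing of twisted $L$-values.

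Since the newform oldspaces inside $\mathcal{S}_k(N)$ attached to distinct newforms are mutually orthogonal, it suffices to fix a newform $g \in \mathcal{S}_k^{\mathrm{new}}(M)$ with $M \mid N$ and show that each oldform $g|B_e$ (with $eM \mid N$) pairs non-trivially with some product in $\mathcal{Q}_k(N)$. Unfolding one of the Eisenstein factors in $P = E_l^{\phi,\psi}|B_{d_1 d}\cdot E_{k-l}^{\overline{\phi},\overline{\psi}}|B_{d_2 d}$ against the fundamental domain expresses $\langle g|B_e, P\rangle$ as a Rankin--Selberg convolution of $g$ and the remaining Eisenstein series. By standard identities this evaluates to a local factor depending on the shifts $e,d,d_1,d_2$ and the characters, times a special value of the form $L(g\otimes\chi, k-1)$ for a Dirichlet character $\chi$ built from $\phi$, $\psi$, and the nebentypus of $g$. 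Because $k \geq 4$, the point $s = k-1$ lies strictly inside the half-plane of absolute convergence of $L(g\otimes\chi, s)$, where the Euler product rules out any zeros. Hence the inner product is non-zero as long as the local factor is, and the entire problem reduces to purely local linear algebra at each prime dividing $N/M$.

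The main obstacle, and the reason for the hypothesis $N = p^a q^b N'$, lies precisely in this local linear algebra. At a prime $\ell$ dividing the squarefree part $N'$, the local oldspace is at most two-dimensional, and the constraint ``$d_1 M_1$ divides the squarefree part of $N$'' still leaves enough freedom, via a choice of $\phi$ of conductor $\ell$ together with the shifts, for the local pairing matrix to be invertible. At the two bad primes $p$ and $q$ the local oldspace is larger, and the constraint forces their contribution to come exclusively through the $B_{d_2 d}$ lifts; one must then verify that the matrix of local factors indexed by admissible pairs $(d, d_2)$ is of full rank. I expect this to reduce to an explicit Euler-factor computation that can be carried out separately at each of the two bad primes, but that the combinatorics breaks down once three or more primes divide $N$ to higher powers, accounting for the restriction in the theorem.
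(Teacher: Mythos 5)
Your reduction already breaks at the non-vanishing step, and this is not a repairable detail: it is the entire difficulty of the theorem. When you unfold $\langle g|B_e,\,E_l^{\phi,\psi}|B_{d_1d}\cdot E_{k-l}^{\overline{\phi},\overline{\psi}}|B_{d_2d}\rangle$ by Rankin--Selberg, you do not get a single value $L(g\otimes\chi,k-1)$; you get (up to elementary nonzero factors and shift data) a \emph{product} of two $L$-values, essentially $L(g,k-1)\cdot L(g_{\phi},\,k-l)$, as in Proposition \ref{prop:inner-prod-as-l-values} and Corollary \ref{cor:inner-prod-as-l-values}. The first factor is indeed in the region of absolute convergence for $k\geq 4$ and never vanishes, but the second factor is a \emph{twisted critical value}: as $l$ runs through $1,\dots,k-1$ the point $k-l$ runs through the whole critical strip, including the central point, and such values can and do vanish (this is precisely why weight $2$ only yields $\mathcal{S}_{2,\mathrm{rk}=0}$). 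So the problem does not ``reduce to purely local linear algebra'': a form orthogonal to all the products is exactly a form whose twisted critical $L$-values all vanish, and one must prove a genuinely global statement that this forces the form to be zero. The paper does this with a vanishing criterion via Manin symbols extending Merel (Theorems \ref{thm:eichler-shimura-triv-char} and \ref{thm:eichler-shimura-paqb}), combined with the Kohnen--Martin trick of replacing an orthogonal form $g=\sum_i\beta_i f_i$ by $G=\sum_i\beta_i L(f_i,k-1)f_i$ so that orthogonality becomes $L(G_\alpha,k-l)=0$, and with Proposition \ref{prop:eichler-shimura-trivial-char} to recover the excluded values $j=2,k-2$ (excluded because $E_2^{\mathbf{1},\mathbf{1}}$ is not holomorphic). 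Relatedly, you misidentify where the hypothesis $N=p^aq^bN'$ enters: it is not a rank condition on local pairing matrices at $p$ and $q$, but the fact that in the modular-symbols argument at most two primes can divide $N$ to a power $\geq 2$, so that for $(u,v)\in E_N$ with $u,v$ each divisible by such a prime the class $-u-v$ is divisible by neither, and the three-term Manin relation \eqref{eq:manin-symb-rel2} closes the argument.

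There is also a structural gap in your first reduction. Since $\mathcal{Q}_k(N)$ is not stable under the Hecke operators (only $P_k(N)$ is stable under the Atkin--Lehner operators $W_\ell^N$, $\ell\mid N'$, by construction), you cannot split a hypothetical form orthogonal to $\mathcal{Q}_k(N)$ into its newform-class components and treat each class separately: orthogonality of the sum to a product does not give orthogonality of each component. Moreover ``each oldform $g|B_e$ pairs non-trivially with some product'' is weaker than what surjectivity requires even within one class; one needs nondegeneracy of the whole pairing, which again runs into possibly vanishing twisted central values. The paper instead works with the projection to $\mathcal{S}_k^{\mathrm{new}}(N_0)$ for each $N_0\mid N$ (Theorem \ref{thm:prods}), uses AL-eigencomponents rather than Hecke eigencomponents, shows in Section \ref{sctn:new-part} that $P_k$ and $Q_k$ have the same new-part projection, and then gets the full statement by induction over divisors via $\mathcal{S}_k(N)=\bigoplus_{N_0\mid N}\bigoplus_{d\mid N/N_0}\mathcal{S}_k^{\mathrm{new}}(N_0)|B_d$. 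None of these ingredients appears in your sketch, so as written the proposal does not prove the theorem.
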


In particular, if $N=p^a q^b$ we only need $\mathcal{E}_k(N)$ and products of the form $E_{l}^{\textbf{1}, \psi}|B_{d}\cdot E_{k-l}^{\textbf{1}, \overline{\psi}} | B_{d_2d}$ to generate $\mathcal{M}_k(N)$. This is a minor improvement to  \cite{BorisovGunnells2001toricvarieties},\cite{BorisovGunnells2001}, and \cite{BorisovGunnells2003}, from which one can deduce that $\mathcal{E}_k(N)$ and products of the form
$E_{l}^{\textbf{1}, \psi}|B_{d_1}\cdot E_{k-l}^{\textbf{1},\overline{\psi}} | B_{d_2}$ generate $\mathcal{M}_k(N)$. An advantage of our proof is that it does not use the theory of toric modular forms from which Borisov--Gunnells draw a powerful Hecke stability result. Instead we use a vanishing criterion for cusp forms with many vanishing $L$-values, Theorem \ref{thm:eichler-shimura-paqb}, which could be of independent interest.

The case of weight $2$ is different: Indeed, one sees immediately from the Rankin--Selberg method that the products of Eisenstein series are orthogonal to every newform $f$ with vanishing central $L$-value, i.e. $L(f,1)=0$.  Accordingly we define the space $\mathcal{S}_{2,\mathrm{rk}=0}(N)$ to be generated by newforms and lifts of newforms with non-zero central $L$-value.  We obtain the analogue of Theorem \ref{intro:prods-full-space} subject to this constraint.

\begin{thm}\label{intro:prods-full-space-k=2}  Let $N$ and $\mathcal{Q}_2(N)$ be as in Theorem \ref{intro:prods-full-space}.  Then
\[
\mathcal{S}_{2,\mathrm{rk}=0}(N) \oplus \mathcal{E}_2(N)= \mathcal{Q}_2(N)+\mathcal{E}_2(N).
\]
\end{thm}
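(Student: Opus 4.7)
The plan is to prove the two inclusions separately. For $\mathcal{Q}_2(N) + \mathcal{E}_2(N) \subseteq \mathcal{S}_{2,\mathrm{rk}=0}(N) \oplus \mathcal{E}_2(N)$, I would invoke the Rankin--Selberg observation from the introduction: for any newform $g$ of level $M \mid N$ with $L(g,1)=0$, any of its $B_D$-lifts, and any admissible quintuple $(\phi,\psi,d_1,d_2,d)$, the Petersson pairing
\[
\langle g|B_D,\; E_1^{\phi,\psi}|B_{d_1d}\cdot E_1^{\overline{\phi},\overline{\psi}}|B_{d_2d}\rangle
\]
vanishes, because its Rankin--Selberg unfolding carries $L(g,1)$ as a factor. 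Hence the cuspidal projection of $\mathcal{Q}_2(N)$ is Petersson-orthogonal to $\mathcal{S}_{2,\mathrm{rk}>0}(N)$ and therefore lies in $\mathcal{S}_{2,\mathrm{rk}=0}(N)$.

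For the reverse inclusion $\mathcal{S}_{2,\mathrm{rk}=0}(N) \subseteq \mathcal{Q}_2(N) + \mathcal{E}_2(N)$, I would adapt the argument used for Theorem \ref{intro:prods-full-space}. Suppose $f \in \mathcal{S}_{2,\mathrm{rk}=0}(N)$ is Petersson-orthogonal to the cuspidal projection of every element of $\mathcal{Q}_2(N)$; the aim is to force $f=0$. Each orthogonality relation $\langle f,\, E_1^{\phi,\psi}|B_{d_1d}\cdot E_1^{\overline{\phi},\overline{\psi}}|B_{d_2d}\rangle = 0$ unfolds, via the Rankin--Selberg method, to a linear condition on the twisted central $L$-values $L(g,\chi,1)$ of the newform components $g$ of $f$, one condition for each admissible quintuple.

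I would then invoke the weight-$2$ version of Theorem \ref{thm:eichler-shimura-paqb}: under the hypothesis $N=p^a q^b N'$ with $N'$ squarefree, the resulting family of $L$-value vanishings forces $f$ to lie in $\mathcal{S}_{2,\mathrm{rk}>0}(N)$. Since the decomposition $\mathcal{S}_2(N) = \mathcal{S}_{2,\mathrm{rk}=0}(N) \oplus \mathcal{S}_{2,\mathrm{rk}>0}(N)$ is direct---the two summands being spanned by disjoint collections of newforms and their $B_D$-lifts---the assumption $f\in\mathcal{S}_{2,\mathrm{rk}=0}(N)$ then forces $f=0$, completing the proof.

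The main obstacle I expect is ensuring that the products actually present in $\mathcal{Q}_2(N)$---constrained by the requirement that $d_1 M_1$ divide the squarefree part of $N$---generate enough independent Rankin--Selberg relations to apply the vanishing criterion. Matching admissible quintuples $(\phi,\psi,d_1,d_2,d)$ with the newforms at every level $M \mid N$ is where the two-prime restriction on the non-squarefree part of $N$ will enter, in exact parallel with the $k\geq 4$ case. The only substantive difference from Theorem \ref{intro:prods-full-space} is that the criterion in weight $2$ detects vanishing only modulo $\mathcal{S}_{2,\mathrm{rk}>0}(N)$ rather than absolute vanishing, which is precisely what accommodates the restriction to $\mathcal{S}_{2,\mathrm{rk}=0}(N)$ in the statement.
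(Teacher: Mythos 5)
Your overall strategy (Rankin--Selberg for one inclusion, an $L$-value vanishing criterion for the other) is indeed the paper's, but the reverse inclusion as you set it up has a genuine gap. Theorem \ref{thm:eichler-shimura-paqb} applies only to a form in the \emph{new} subspace $\mathcal{S}_2^{\mathrm{new}}(N)$ which is an eigenfunction of the Atkin--Lehner operators $W_\ell^N$ for $\ell\mid N'$, and its hypothesis is the vanishing of the twisted values $L(f_\alpha,1)$ of that single form --- not a family of linear relations among $L$-values of the newform components of an arbitrary $f\in\mathcal{S}_{2,\mathrm{rk}=0}(N)$. Two devices bridge this and are absent from your sketch. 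First, the contradiction argument is run with the Atkin--Lehner-stable space $P_2(N)$ of Definition \ref{dfn:PkN} (products hit with $W_S^N$), so that the orthogonal complement of its projection contains an Atkin--Lehner eigenform $g=\sum_i\beta_i f_i$; this eigenform property is also what makes Lemma \ref{lem:twist-by-non-prim-char} usable via the $U_p$-eigenvalues. The space $\mathcal{Q}_2(N)$ you work with is not visibly Atkin--Lehner stable, so ``we may assume $f$ is an eigenform'' is not available in your setup. Second, orthogonality unfolds to the \emph{bilinear} relations $\sum_i\beta_i L(f_i,1)L((f_i)_\alpha,1)=0$, and one must pass to the auxiliary form $G=\sum_i\beta_i L(f_i,1)f_i$ so that these become literally $L(G_\alpha,1)=0$; the criterion then yields $G=0$, i.e. $\beta_i=0$ exactly when $L(f_i,1)\neq 0$ --- this is where the rank-zero restriction enters, not because the criterion ``detects vanishing only modulo $\mathcal{S}_{2,\mathrm{rk}>0}(N)$'' (it forces the form it is applied to to vanish outright). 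Finally, the criterion only controls the new part at level $N$; the statement for all of $\mathcal{S}_{2,\mathrm{rk}=0}(N)$ is obtained by comparing the new-part projections of $P_2$ and $Q_2$ as in \S\ref{sctn:new-part} and inducting over the decomposition $\mathcal{S}_2(N)=\bigoplus_{N_0 d\mid N}\mathcal{S}_2^{\mathrm{new}}(N_0)|B_d$, not by a single application of the criterion to a non-new $f$.

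In the first inclusion, your assertion that pairing a lift $g|B_D$ of a rank-positive newform against the products ``carries $L(g,1)$ as a factor'' is precisely Proposition \ref{prop:oldforms-orthogonal-to-prods}, and it is not immediate: unfolding against a lift produces the Dirichlet series $\sum_n a_n\,\sigma_{1,\mathbf{1},\alpha}(d'n)\,n^{-s-1}$, which is not a clean product of $L$-functions; one must extract $L(g,s+1)L(g_\alpha,s+1)/L(\alpha,2s+1)$ and verify that the correction factors at primes $p\mid d'$ (and the extra finite sums) are holomorphic at $s=0$, so that the zero of $L(g,1)$ is not offset by a pole. The paper devotes the proof of that proposition to exactly this check, so your outline is the right route but this step, together with the items above, needs to be supplied.
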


This phenomenon of isolating $\mathcal{S}_{2,\mathrm{rk}=0}(N)$ is also observed by Borisov--Gunnells \cite{BorisovGunnells2001}. \\

We develop much of the theory to allow for more general level than $N=p^a q^b N'$ and will discuss this restriction for the level $N$ below.  We also point out how similar results can be obtained when the character $\chi$ is non-trivial by proving the analogue of Theorem \ref{intro:prods-full-space-k=2} for $\mathcal{S}_2(p, \chi)$, see Theorem \ref{thm:prods-gen-weight-2-prime}.  Before we give an idea for the proof of the theorems we give a few explicit examples, and highlight some of the applications of such an expression for a newform:

\begin{enumerate}
\item $N=1,k=12$: The most well-known example is of course the discriminant modular form, which in our normalisation becomes
\[
\Delta = \frac{50}{3}E_4^{\textbf{1},\textbf{1}}E_8^{\textbf{1},\textbf{1}}-\frac{147}{4}(E_6^{\textbf{1},\textbf{1}})^2.
\]
\item\label{intro:example-2} $N=11, k=2$: Let $\phi$ be the character modulo $11$ that maps $2$ to $\zeta_{10}$ and $f_{11} \in \mathcal{S}_2(11)$ be the unique newform in this space. Then
\[
f_{11} = \left(\frac{1}{\sqrt{5}} - \frac{1}{4}\right)E_1^{\textbf{1},\phi}E_1^{\textbf{1},\overline{\phi}}-\left(\frac{1}{\sqrt{5}} + \frac{1}{4}\right)E_1^{\textbf{1},\phi^3}E_1^{\textbf{1},\overline{\phi}^3}.
\]
\item\label{intro:example-3} $N=32, k=2$: Let $\chi_4$ be the primitive character modulo $4$ and let $f_{32}= q - 2q^5 -3q^9 + 6q^{13} +2q^{17}+ O(q^{20}) \in \mathcal{S}_2(32)$ be the unique newform in this space. Then
\begin{equation*}
f_{32} = -\frac12 E_1^{\textbf{1},\chi_4}\cdot E_1^{\textbf{1},\chi_4}|B_4 + \frac{1}{\sqrt{2}}E_1^{\textbf{1},\chi_4}\cdot E_1^{\textbf{1},\chi_4}|B_8 +\frac{1}{2\sqrt{2}}E_1^{\textbf{1},\chi_4}|B_2\cdot E_1^{\textbf{1},\chi_4}|B_4 -\frac{1}{2}E_1^{\textbf{1},\chi_4} |B_2\cdot E_1^{\textbf{1},\chi_4}|B_8.
\end{equation*}
\end{enumerate}

An expression of a modular form $f$ as a sum of products of Eisenstein series provides a way of calculating the Fourier expansion of $f$ at $\infty$.  Once such an expression for $f$ is obtained, $O(n \log(n))$ operations are required to compute $n$ Fourier coefficients of $f$ which is theoretically best possible,\begin{footnote}{We thank A. Booker for pointing this out.}\end{footnote} and also appears to work well in practice.\begin{footnote}{See \url{http://mathoverflow.net/q/221781/} for an example computed by D. Loeffler}\end{footnote}  \\

Moreover, as mentioned in \cite{Raum2016}, one can use such expressions to compute Fourier expansions at \textit{any} cusp of $\Gamma_0(N)$. For square-free $N$ one can deduce the Fourier expansion of a modular form $f\in\mathcal{M}_k(N)$ at any cusp from the one at $\infty$ by using Atkin-Lehner operators. However for general $N$ the Fourier expansions at certain cusps are difficult to access yet carry important information. We have implemented an algorithm in Sage \cite{sage} that calculates Fourier expansions at any cusp of $\Gamma_1(N)$ based on a representation of the modular form as a linear combination of products of Eisenstein series. For example we can use the expansion in example \eqref{intro:example-3} to obtain the Fourier expansion of $f_{32}$ at the cusp $\frac{1}{8}$. For that purpose we first choose $\gamma = \sabcd{1}{0}{8}{1}\in\SL_2(\Z)$ that sends $\infty$ to $1/8$. The expansion is then given by 
\[
f_{32}|\gamma = -iq + 2iq^5 + 3iq^9 - 6iq^{13} - 2iq^{17} + O(q^{20})=-if_{32}.
\]

Using the Fourier expansions of a newform $f$ at cusps we can also compute the root number of $f$ and eigenvalues of Atkin-Lehner operators.  This furnishes another example of an important datum which cannot immediately be read from the Fourier expansion of $f$ at $\infty$ when the level is not squarefree. In Section \ref{scn:Fourier expansions} we give several more examples of Fourier expansions of modular forms of levels that involve high prime powers, where we also calculate Atkin-Lehner eigenvalues. The algorithms we use are available at \cite{Github_mneururer}.
\\

In future work \cite{DicksonNeururer17} we plan to use this method of computing Fourier expansions at cusps other than $\infty$ to study the local representation-theoretic aspects of newforms.  The connection between Fourier expansions at cusps and local components of newforms is explained in \cite{Brunault2016} and \cite{CorbettSaha2017}.  In brief, the Fourier expansions at cusps of a newform $f$ can be used to obtain values of the new vector in the local Whittaker model.  From this one can extract root numbers of twists of the local representation, and also determine the local component of $f$.\\

Let us now give a sketch of proof of Theorem \ref{intro:prods-full-space} (the proof of Theorem \ref{intro:prods-full-space-k=2} requires minor modifications). Note that in Section \ref{sctn:generating-cusp-forms} we argue for the most part with a space $P_k(N)$ instead of $\mathcal{Q}_k(N)$, and in \S \ref{sctn:new-part} we show that $P_k(N)$ has the same projection to $\mathcal{S}_k^{\text{new}}(N)$ as $\mathcal{Q}_k(N)$. Theorem \ref{intro:prods-full-space} then follows by induction from the fact that the projection of $P_k(N)$ to $\mathcal{S}_k^{\text{new}}(N)$ equals $\mathcal{S}_k^{\text{new}}(N)$, which is the statement of Theorem \ref{thm:prods}. For the proof of Theorem \ref{thm:prods} suppose to the contrary, that the projection of $P_k(N)$ cuts out a proper subspace of $\mathcal{S}_k^{\text{new}}(N)$. We may pick a non-zero $G \in \mathcal{S}_k^{\text{new}}(N)$, orthogonal to $P_k(N)$.  A standard application of the Rankin--Selberg method (\S \ref{sctn:generating-cusp-forms}) allows one to see that, if $G$ were a \textit{newform}, then all the critical $L$-values $L(G_\psi | W_S^{NM}, j)$ would have to vanish for all primitive characters $\psi$ of conductor dividing $N$ and all sets $S$ of prime divisors of $N'$ (except for some cases when $j=2, k-2$, when the technical difficulties coming from weight two Eisenstein series enter). Here $G_\psi$ is the twist of $G$ by $\psi$. However, since our $P_k(N)$  might not be closed under the action of the Hecke operators, we cannot assume that $G$ is a newform.  On the other hand, our space $P_k(N)$ will, by construction, be closed under the action of the Atkin--Lehner operators $W_\ell^N$ for $\ell|N'$ and so we can at least assume that $G$ is an eigenfunction of all these operators.  Moreover we can modify $G$ so that the orthogonality of $G$ to $P_k(N)$ is equivalent to the vanishing of many twisted $L$-values of $G$. We prove a general statement, possibly of independent interest, that if $G$ is a cusp form which is an eigenfunction of certain Atkin--Lehner operators and for which sufficiently many twisted $L$-values vanish, then $G=0$.  The argument proceeds via modular symbols, and extends a result of Merel who proves a similar vanishing criterion in the case when $G$ is a newform.\\

The reason the assumption $N = p^a q^b N'$ enters is because we want to be in a situation where,  if $f$ is a newform (or a sum of newforms with the same $W_\ell^N$-eigenvalue for all $\ell \in T$) and $\alpha$ is a primitive character modulo $M \mid N$, then the $W_S^{NM}$ (pseudo-)eigenvalues of the twists $f_\alpha$ for each set $S$ of prime divisors of $N/M$ are determined by those of $f$. With our methods, this conditon arises naturally in the proof of Theorem \ref{intro:prods-full-space}, and our argument would extend immediately to any situation where it holds. 
When $N$ is squarefree this condition is automatic, since the twisting and Atkin--Lehner operators must commute (c.f. Proposition \ref{prop:twists-commute-with-al-and-hecke}).  When $N$ is not squarefree this is a much more difficult question, and it seems unlikely that a purely local argument will work.  Indeed our extension to level $N = p^a q^b N'$ stems from a rather different argument using the modular symbols relations, which allows us to to avoid this condition altogether when the number of prime factors of $N$ is restricted as above.\\

Let us finish by remarking that one can easily compute (see \S\ref{sctn:generating-cusp-forms}) a trivial upper bound $p_k(N)$ for the number of generators of $Q_k(N)$.  We compare this to the dimension $\dim \mathcal{S}^{\text{new}}_k(N)$ in the ``squarefree'' and ``prime-power'' level aspects.  In both cases the result is that $P$ grows quicker than $\dim \mathcal{S}_k(N)$, although not by much, particularly in the prime-power case.  When $k=2$ the problem of improving the upper bound $p_k(N)$ is interesting because of potential applications to the conjecture of Brumer on the number of newforms of level $N$ for which $L(f, 1) \neq 0$.\\

\section{Preliminaries}\label{sctn:preliminaries}

Let $N, k \in \Z$ be a positive integers, and let $\chi$ be a Dirichlet character modulo $N$.  We keep the notations from the introduction for spaces of modular forms; we tacitly assume that $\chi(-1) = (-1)^k$ since otherwise these spaces are zero.  Our normalisation of the slash operator is
\[\left(f |_k \begin{pmatrix} a & b \\ c & d \end{pmatrix}\right)(z) = \frac{(ad-bc)^{k/2}}{(cz+d)^k} f\left(\frac{az + b}{cz+d}\right),\]
so that diagonal matrices act trivially.  We write $|$ for  $|_k$ since the weight $k$ will be clear from the context.\\

We denote by $\mathbf{1}_N$ the principal character modulo $N$, which satisfies $\mathbf{1}_N(n) = 1$ for $(n, N) = 1$, and $\mathbf{1}_N(n) = 0$ otherwise.  We write $\mathbf{1}$ for the trivial character, which satisfies $\mathbf{1}(n) = 1$ for all $n$.  Any character $\chi$ modulo $N$  can be factorised as a product $\chi = \prod_{p|N} \chi_p$ over the prime divisors of $p|N$, where $\chi_p$ is a uniquely determined character modulo $p^{v_p(N)}$. If $S$ is a set of prime divisors of $N$, then we write $\chi_S=\prod_{p\in S}\chi_p$ for the $S$-part of $\chi$.  For a set $S$ of prime divisors of $N$ and a divisor $M$ of $N$, we write $M_S$ for the $S$-part of $M$, i.e. $\prod_{p\in S}p^{v_p(M)}$. We will also use the notation $S_M=\{p\in S; p\mid M\}$ and $\overline{S} = \{p \mid N; p\text{ prime}\} \setminus S$ (we will clarify the dependence on $N$ when confusion may arise). With this notation we have $M_SM_{\overline{S}}=M$ for any divisor $M|N$, in particular $N_SN_{\overline{S}}=N$.\\

For primes $p$ with $(p, N)=1$, we write $T_p$ for the Hecke operators on $\mathcal{M}_k(N, \chi)$; these are extended multiplicatively to $T_n$ for $(n, N)=1$.  When $q \mid N$ we write $U_q$ for the Hecke operators extended from the operators $U_p$ (where $p$ is a prime divisor of $N$); the normalisation is
\[f | U_p = p^{k/2-1} \sum_{j=0}^{p-1} f | \begin{pmatrix} 1 & j \\ 0 & p\end{pmatrix}.\]

For a set of prime divisors $S$ of $N$ we define the Atkin--Lehner involution
\[
W_S^N=\begin{pmatrix}
N_Sx & y \\ Nz & N_Sw
\end{pmatrix}\in\text{M}_2(\Z),
\]
where $y\equiv 1\pmod {N_S}$, $x\equiv 1\pmod {N_{\overline{S}}}$ and $\det W_S^N=N_S$.  If $M$ is a divisor of $N$, then we sometimes use the notation $W_M^N$ for $W_S^N$ with $S = \{p \mid M\}$.  We simply write $W_N$ for $W_N^N=\left(\begin{smallmatrix} 0 & 1 \\ -N & 0\end{smallmatrix}\right)$.  The following properties of $W_S^N$ are well-known (see for example \cite[\S 1]{AtkinLi78}):  
\begin{prop}\label{prop:al}  \begin{enumerate}[(i)]
\item Let $S$ be a set of prime divisors of $N$.  If
\[\begin{aligned}
M= \begin{pmatrix}
N_Sx' & y' \\ Nz' & N_Sw'
\end{pmatrix}
\end{aligned}\]
is any matrix with $x',y',z',w'\in\Z$ of determinant $N_S$ then
\begin{align}\label{eq:other-atkin-op}
f| M = \overline{\chi}_S(y')\overline{\chi}_{\overline{S}}(x')f| W^N_S.
\end{align}
In particular, $W_S^N$ does not depend on the choice of $x, y, z, w$.
\item Let $f\in \mathcal{M}_k(N,\chi)$. Then
\[
f| W_S^N \in \mathcal{M}_k(N,\overline{\chi}_S\chi_{\overline{S}}),
\]
and $W_S^N$ preserves the subspace of cusp forms. 
\item If $S$ and $S'$ are disjoint sets of prime divisors of $N$, then
\[
f|W_{S\cup S'}^N = \chi_{S'}(N_S)(f|W_S^N)|W_{S'}^{N}.
\]
We also have
\begin{align}\label{eq:al-twice}
f | W_S^N | W_S^N = \chi_S(-1)\overline{\chi}_{\overline{S}}(N_S)f.
\end{align}
\item The adjoint of $W_S^N$ on $\mathcal{M}_k(N,\chi)$ with respect to the Petersson inner product is given by
\[
W_S^{N,*}=\chi_S(-1)\chi_{\overline{S}}(N_S)W_S^N.
\]
\item Let $p$ be a prime divisor of $N$ with $p \notin S$. Then
\[
f|U_p|W_S^N = \chi_S(p)f|W_S^N|U_p.
\]
\end{enumerate}\end{prop}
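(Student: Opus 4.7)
The plan is to treat (i) as the foundational statement and deduce the other four parts from matrix arithmetic in $\mathrm{GL}_2^+(\Q)$ combined with the standard transformation law for the slash action. Part (i) asserts that on $\mathcal{M}_k(N,\chi)$ the slash action of any matrix of ``Atkin--Lehner shape'' $\sabcd{N_Sx'}{y'}{Nz'}{N_Sw'}$ of determinant $N_S$ equals the slash action of $W_S^N$ times the character factor $\overline{\chi}_S(y')\overline{\chi}_{\overline{S}}(x')$.

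For (i), given such an $M$, I would form $\gamma := (W_S^N)^{-1} \cdot M \in \SL_2(\Z)$. The congruence conditions defining the Atkin--Lehner shape (namely $N$ divides the lower-left entry, $N_S$ divides the diagonal entries, and the distinguished diagonal entries are $\equiv 1$ modulo the complementary part) force $\gamma \in \Gamma_0(N)$; a direct inspection of $\gamma$ modulo $N_S$ and $N_{\overline{S}}$ then shows that $(\overline{\chi}_S\chi_{\overline{S}})(\gamma_{22}) = \overline{\chi}_S(y')\overline{\chi}_{\overline{S}}(x')$. Writing $f | M = (f|W_S^N)|\gamma$ and applying the transformation law on $\mathcal{M}_k(N, \overline{\chi}_S \chi_{\overline{S}})$ (in which $f | W_S^N$ will live by (ii)) yields the formula. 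For (ii), conjugating a general $\delta \in \Gamma_0(N)$ by $W_S^N$ produces a matrix which again lies in $\Gamma_0(N)$, and the induced character value differs from that of $\delta$ exactly by the swap $\chi_S \leftrightarrow \overline{\chi}_S$; preservation of cusp forms is immediate since $W_S^N \in \mathrm{GL}_2^+(\Q)$ permutes the cusps of $\Gamma_0(N)$.

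For (iii), a direct multiplication shows that $W_S^N \cdot W_{S'}^N$ has determinant $N_{S\cup S'}$ and Atkin--Lehner shape indexed by $S \cup S'$, so part (i) identifies it with a scalar multiple of $W_{S\cup S'}^N$; tracking one diagonal entry modulo the relevant part of $N$ produces the scalar $\chi_{S'}(N_S)$. Specialising $S' = S$ and multiplying out directly yields \eqref{eq:al-twice}. For (iv), the Petersson adjoint of $f \mapsto f|\gamma$ for $\gamma$ normalising $\Gamma_0(N)$ is the slash action by $\det(\gamma)\cdot\gamma^{-1}$; for $\gamma = W_S^N$ this inverse matrix is again of Atkin--Lehner shape, so part (i) re-identifies it as a scalar multiple of $W_S^N$ with scalar $\chi_S(-1)\chi_{\overline{S}}(N_S)$. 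For (v), one takes standard coset representatives $\sabcd{1}{j}{0}{p}$ for $U_p$ and computes both $U_p \circ W_S^N$ and $W_S^N \circ U_p$; after re-expressing each summand via part (i) the two sums match coset-by-coset up to the overall factor $\chi_S(p)$.

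The main obstacle throughout is purely bookkeeping: at each step one must verify that a given product of matrices really has Atkin--Lehner shape and that the character values extracted from the diagonal entries modulo $N_S$ and $N_{\overline{S}}$ are exactly the claimed ones. The $U_p$ commutation in (v) is the most delicate of these, since it requires re-parametrising the sum of coset representatives after multiplication by $W_S^N$.
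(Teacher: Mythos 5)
The paper itself gives no proof of this proposition; it is quoted as well-known with a pointer to Atkin--Li, and your plan is exactly the standard matrix-bookkeeping argument from that source, so it is correct in outline. Two small points deserve tightening. First, as written your proof of (i) slashes $f|W_S^N$ by $\gamma=(W_S^N)^{-1}M\in\Gamma_0(N)$ and so invokes the transformation law of $f|W_S^N$, i.e.\ part (ii); either prove (ii) first by the conjugation computation you sketch (which is independent of (i)), or avoid the dependence altogether by factoring on the other side, $M=\gamma'\,W_S^N$ with $\gamma'=M(W_S^N)^{-1}\in\Gamma_0(N)$, so that only the transformation of $f$ itself under $\Gamma_0(N)$ is used; in either case the determinant relations $N_Sxw-N_{\overline{S}}yz=1$ and $N_Sx'w'-N_{\overline{S}}y'z'=1$, combined with $y\equiv 1\pmod{N_S}$ and $x\equiv 1\pmod{N_{\overline{S}}}$, produce exactly the factor $\overline{\chi}_S(y')\overline{\chi}_{\overline{S}}(x')$. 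Second, for \eqref{eq:al-twice} note that $(W_S^N)^2$ has determinant $N_S^2$, so part (i) does not apply to it, and the first identity of (iii) assumes $S\cap S'=\emptyset$, so one cannot literally set $S'=S$; the correct direct computation is to observe $(W_S^N)^2=N_S\gamma$ with $\gamma\in\Gamma_0(N)$ (scalars act trivially in the paper's normalisation of the slash), and the congruences above give $\gamma_{22}\equiv-1\pmod{N_S}$, $\gamma_{22}\equiv N_S^{-1}\pmod{N_{\overline{S}}}$, hence the factor $\chi_S(-1)\overline{\chi}_{\overline{S}}(N_S)$. With these adjustments your deductions of (iii), of (iv) (via invariance of the Petersson product under simultaneous slashing, together with the observation that $N_S(W_S^N)^{-1}$ again has Atkin--Lehner shape, so (i) applied on the target character space yields $\chi_S(-1)\chi_{\overline{S}}(N_S)$), and of (v) by re-parametrising the $U_p$ coset representatives all go through as you describe.
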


By a newform, we mean an element $f \in \mathcal{S}_k(N, \chi)$ which is an eigenfunction of all Hecke operators, normalised to have first Fourier coefficient equal to one.  We write $\mathcal{S}_k^{\text{new}}(N, \chi)$ for the subspace of $\mathcal{S}_k(N, \chi)$ generated by the newforms, so $f \in \mathcal{S}_k^{\text{new}}(N, \chi)$ is a linear combination of newforms; we refer to these as elements of the new subspace. If $\chi$ can be defined modulo $N/q$, then any newform in $\mathcal{S}_k(N,\chi)$ is an eigenfunction of the operator $W_q^N$, see Proposition 13.3.11 in \cite{CohenStromberg2017}.\\

Let $q$ be a prime divisor of $N$. On the new subspace there is a close connection between the Hecke operator $U_q$ and the Atkin-Lehner operator $W_q^N$:
\begin{prop}\label{prop:al-hecke-connection}[\cite{CohenStromberg2017} Proposition 13.3.14]
Let $\chi$ be a Dirichlet character modulo $N$ and suppose it is induced by a character modulo $N/q$ (or equivalently that $\chi_q$ is principal). Let $f$ be a newform of $\mathcal{S}_k(N,\chi)$ with $q$-th Fourier coefficient $a_q$ and Atkin-Lehner eigenvalue $\lambda_q(f)$.
\begin{itemize}
\item If $q^2\mid N$ then $a_q=0$.
\item If $q \mid N$ but $q^2\nmid N$ then $\lambda_q(f) = -q^{1-\frac{k}{2}}a_q$ and hence we have the equality of operators
\[
W_q^N = -q^{-\frac{k}{2}+1}U_q.
\]
on $\mathcal{S}_k^{\text{new}}(N, \chi)$.
\end{itemize}
\end{prop}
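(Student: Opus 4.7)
The proposition is standard Atkin--Lehner theory, and my plan is to sketch the main ideas rather than grind through the matrix bookkeeping. The hypothesis that $\chi_q$ is principal enters twice via Proposition \ref{prop:al}: by (ii) it ensures $W_q^N$ preserves $\mathcal{S}_k(N,\chi)$ (since $\overline{\chi}_q\chi_{\overline{q}} = \chi$); combined with (v), which gives $W_q^N U_p = \chi_q(p) U_p W_q^N = U_p W_q^N$ for $p\mid N$ with $p\neq q$, and the analogous commutation $W_q^N T_p = \chi_q(p) T_p W_q^N$ for $p\nmid N$, this shows that $W_q^N$ commutes with every Hecke operator at primes other than $q$, and in particular preserves $\mathcal{S}_k^{\text{new}}(N,\chi)$. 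Strong multiplicity one then forces any newform $f$ to satisfy $f|W_q^N = \lambda_q(f)f$ for some scalar $\lambda_q(f)$; this part of the argument applies equally to both cases of the proposition.

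For the case $q\|N$, the main step is to establish an operator identity
\[
U_q + q^{k/2-1}\,W_q^N \;\equiv\; 0 \pmod{\mathcal{S}_k^{\text{old}}(N,\chi)}
\]
on $\mathcal{S}_k(N,\chi)$, where $\mathcal{S}_k^{\text{old}}(N,\chi)$ denotes the old subspace. This is obtained by decomposing an explicit matrix representative for $W_q^N$ against the $U_q$ coset representatives $\sabcd{1}{j}{0}{q}$, and recognising the resulting excess as a lift from $\mathcal{S}_k(N/q,\chi)$ via the operator $B_q$ of the introduction. Applied to a newform $f$, which is Petersson-orthogonal to the old subspace, the identity yields $a_q(f) + q^{k/2-1}\lambda_q(f) = 0$, so $\lambda_q(f) = -q^{1-k/2}a_q(f)$. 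The operator identity $W_q^N = -q^{-k/2+1}U_q$ on $\mathcal{S}_k^{\text{new}}(N,\chi)$ then follows by diagonalising in a newform basis.

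For the case $q^2\mid N$, I would instead invoke a level-lowering argument: a non-zero $a_q(f)$, combined with the Hecke relations and the assumption that $\chi_q$ is trivial, would allow the construction of a non-zero form at level $N/q$ with the same Hecke eigenvalues as $f$ at all primes away from $q$, contradicting the newness of $f$ by strong multiplicity one. Hence $a_q(f) = 0$.

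The principal obstacle is the explicit coset decomposition of $W_q^N$ relative to the $U_q$ representatives needed to derive the operator identity above; this is a standard but somewhat delicate $\SL_2(\Z)$ manipulation, whose details are carried out in Cohen--Str\"omberg \cite{CohenStromberg2017}, Proposition 13.3.14.
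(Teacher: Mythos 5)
The paper offers no proof of this proposition: it is quoted directly from \cite{CohenStromberg2017}, Proposition 13.3.14, so your sketch is being compared against a citation rather than an argument. Your outline is the standard Atkin--Lehner--Li proof and is structurally sound: the first paragraph (commutation of $W_q^N$ with the prime-to-$q$ Hecke operators, plus multiplicity one, giving the eigenvalue $\lambda_q(f)$) is fine, and the $q\,\|\,N$ case does reduce to an identity of the kind you state. For the record, that identity follows in two lines from the trace map rather than a delicate decomposition: with the paper's normalisation of $W_q^N$ one may take the non-identity coset representatives of $\Gamma_0(N)\backslash\Gamma_0(N/q)$ to be $\tfrac{1}{q}W_q^N\sabcd{1}{j}{0}{q}$ for $j \bmod q$, so that $\mathrm{Tr}^N_{N/q}(h)=h+q^{1-k/2}\,h|W_q^N|U_q$ for all $h\in\mathcal{S}_k(N,\chi)$; applying this to $h=f|W_q^N$, whose trace vanishes because $f$ is new, and using \eqref{eq:al-twice} gives $\lambda_q(f)=-q^{1-k/2}\overline{\chi}(q)\,a_q$, with $\chi$ read modulo $N/q$. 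Note the factor $\overline{\chi}(q)$ --- equivalently $\overline{a_q}$ in place of $a_q$ --- which your proposed identity $U_q+q^{k/2-1}W_q^N\equiv 0$ modulo oldforms suppresses: it is invisible for trivial or quadratic nebentypus, which is all the paper ever uses, but you should not assert the unadorned identity for general $\chi$. For $q^2\mid N$ your level-lowering idea is correct but can be made concrete without invoking strong multiplicity one: since $q\mid N/q$ and $\cond\chi\mid N/q$, a short coset check shows $U_q$ maps $\mathcal{S}_k(N,\chi)$ into $\mathcal{S}_k(N/q,\chi)$, so $a_qf=f|U_q$ lies in the old subspace and newness alone forces $a_q=0$. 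The one substantive criticism is circular sourcing: the single step you defer --- the decomposition behind the key identity --- is precisely the content of the reference the paper cites for the whole proposition, so as written your proposal adds nothing beyond bookkeeping; the trace computation indicated above is what closes that gap.
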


The third class of operators that play a major role for us are various twisting operators.  Let $f \in \mathcal{S}_k(N, \chi)$ with Fourier expansion $f(z) = \sum_{n \geq 1} a_n e(n z)$, let $\alpha$ be a Dirichlet character modulo $M$, and define
\[f_\alpha(z) = \sum_{n \geq 1} a_n \alpha(n) e(nz).\]
With $\alpha, f$ as above, define also
\[S_\alpha (f) = \sum_{a \bmod M} \overline{\alpha(a)} f|_k \begin{pmatrix} 1 & a/M \\ 0 & 1\end{pmatrix}.\]
Note that if $\alpha$ is primitive modulo $M$ we have
\begin{equation}\label{eqn:twists-for-prim-char}S_\alpha (f) = G(\overline{\alpha}) f_\alpha,\end{equation}
where $G(\overline{\alpha})$ is the Gauss sum of $\alpha$. For any $z \in \mathfrak{H}$ we can view the function $n' \mapsto \left(f|_k \left(\begin{smallmatrix} 1 & n'/N' \\ 0 & 1 \end{smallmatrix}\right)\right)(z)$ as a function $F : (\Z/N'\Z)^\times \to \C^\times$, and we see by Fourier inversion that
\begin{equation}\label{eqn:fourier-inversion} f|_k \begin{pmatrix} 1 & n'/N' \\ 0 & 1 \end{pmatrix} = \sum_{\alpha \bmod N'}\frac{\alpha(n')}{\varphi(N')} S_\alpha(f),\end{equation}
the sum being over all Dirichlet characters modulo $N'$.\\

Finally we state some standard facts about the commutation relations for the above operators in the cases we will need them.  These can be proved by direct computation (see also \cite{AtkinLi78} \S 3).

\begin{prop}\label{prop:twists-commute-with-al-and-hecke}  Let $N \in \N$, let $f \in \mathcal{M}_k(N, \chi)$, let $\alpha$ be a Dirichlet character modulo $N' \mid N$.  Then
\[S_{\alpha}(f) \in \mathcal{M}_k(NN', \chi \alpha^2).\] 
Let $q$ be any divisor of $N$ that is coprime to $N'$, then
\[S_\alpha(f) | U_q = \alpha(q) S_\alpha(f | U_q).\]
Similarly, if $S$ is a set of prime divisors of $N$ such that $N_S$ and $N'$ are coprime, then
\[S_\alpha(f) | W_S^{NN'} = \overline{\alpha}(N_S) S_\alpha(f | W_S^N).\]
\end{prop}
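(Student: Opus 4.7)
All three assertions reduce to direct matrix manipulations combined with the transformation law of $f$ under $\Gamma_0(N)$. In each case the basic move is to rewrite a product $T_a\gamma$ (with $T_a = \sabcd{1}{a/N'}{0}{1}$) in a form that exposes both a $\Gamma_0(N)$-translate of $f$ and a relabeled translation.

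For assertion (1), with $\gamma = \sabcd{A}{B}{C}{D} \in \Gamma_0(NN')$, expand $S_\alpha(f)|_k\gamma = \sum_a \overline{\alpha}(a) f|_k T_a\gamma$. Choose $b(a) \in \Z/N'\Z$ with $b(a) \equiv aD^2 \pmod{N'}$; this is a bijection of $\Z/N'\Z$ because $AD\equiv 1\pmod{N'}$ (from $NN'\mid C$). Define $\gamma_a := T_a\gamma T_{b(a)}^{-1}$; it has bottom-left $C$ and bottom-right $D - Cb(a)/N'$. Since $NN'\mid C$ and $N'\mid N$, we have $N\mid C/N'$, which gives $\gamma_a \in \Gamma_0(N)$ with bottom-right entry $\equiv D\pmod N$. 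Hence $f|\gamma_a = \chi(D)f$, and the reindexing $a \mapsto b(a)$ produces the extra factor $\alpha(D)^2$ via $\overline{\alpha}(a) = \overline{\alpha}(b(a))\alpha(D)^2$, yielding the required transformation law.

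Assertion (2) follows from the elementary matrix identity $T_a \sabcd{1}{j}{0}{q} = \sabcd{1}{j}{0}{q} T_{aq}$, substituted into the definition of $U_q$; interchanging sums and reindexing by $a\mapsto aq \bmod N'$ (a bijection since $(q, N')=1$) pulls out the factor $\alpha(q)$, and what remains is $S_\alpha(f|U_q)$.

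Assertion (3) is the most delicate. Using the freedom in Proposition \ref{prop:al}(i), I would choose representatives for $W_S^{NN'}$ and $W_S^N$ compatibly; since $(N_S, N')=1$ implies $(NN')_S = N_S$, the matrix $\delta := W_S^{NN'}(W_S^N)^{-1}$ has determinant $1$, and direct calculation (using $N'\mid N$) shows $\delta \in \Gamma_0(N)$. Applying the rewrite of assertion (1) to $\delta$ gives $T_a\delta = \delta_a T_{a''(a)}$ with $\delta_a\in\Gamma_0(N)$, so $T_a W_S^{NN'} = \delta_a T_{a''(a)} W_S^N$. A parallel conjugation then writes $T_{a''}W_S^N = W_S^N T_{b(a'')} \cdot \eta_{a''}$ with $\eta_{a''}\in\Gamma_0(N)$; when the two reindexings are composed, the net effect is $b \equiv aN_S^{-1}\pmod{N'}$, and the combined character factor is precisely $\overline{\alpha}(N_S)$. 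The principal difficulty lies in the bookkeeping of these character contributions, which relies repeatedly on both hypotheses $N'\mid N$ and $(N_S, N')=1$ to ensure the required divisibilities and to absorb any stray characters into $\overline{\alpha}(N_S)$ (see the analogous calculations in \cite{AtkinLi78}~\S3).
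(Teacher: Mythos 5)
The paper itself gives no written proof of this proposition (it is recorded as a ``direct computation'', with a pointer to \cite{AtkinLi78} \S 3), so your argument has to stand on its own. Your first two assertions do: for (1) the choice $b(a)\equiv aD^{2}\pmod{N'}$, the fact that $\gamma_a=T_a\gamma T_{b(a)}^{-1}\in\Gamma_0(N)$ with lower-right entry $\equiv D\pmod N$ (both using $NN'\mid C$ together with $N'\mid N$), and the factor $\alpha(D)^2$ from reindexing are exactly right; for (2) the identity $T_a\sabcd{1}{j}{0}{q}=\sabcd{1}{j}{0}{q}T_{aq}$ and the bijection $a\mapsto aq$ give the prime case, and multiplicativity the rest.

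Assertion (3) is where there is a genuine gap. Your $\delta=W_S^{NN'}(W_S^N)^{-1}$ does lie in $\Gamma_0(N)$, but its lower-left entry is divisible only by $N$, not by $NN'$, so ``the rewrite of assertion (1)'' does not apply to it: with $C$ divisible only by $N$ the integrality condition forces $a''(A+aC/N')\equiv aD\pmod{N'}$ rather than $a''\equiv aD^2$, and, more seriously, the lower-right entry of $\delta_a=T_a\delta T_{a''}^{-1}$ is $D-a''C/N'$, which is in general \emph{not} congruent to $D$ modulo $N$; each term therefore acquires an $a$-dependent nebentypus value $\chi(D-a''C/N')$ instead of a constant $\chi(D)$. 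These $a$-dependent $\chi$-factors have to cancel against the ones produced by your second commutation past $W_S^N$ (whose claimed form $T_{a''}W_S^N=W_S^NT_b\eta_{a''}$ is itself delicate, since an $\eta_{a''}\in\Gamma_0(N)$ sitting to the right of $T_b$ does not act on $f|W_S^N|T_b$ by a character), and since the asserted identity contains no $\chi$ at all, establishing precisely this cancellation \emph{is} the content of the statement; deferring it to ``bookkeeping'' leaves it unproved. A cleaner route avoids the detour through $\delta$: writing $W_S^{NN'}=\sabcd{N_Sx}{y}{NN'z}{N_Sw}$, one has $x\equiv 1\pmod{N'}$ and $N_Sw\equiv 1\pmod{N'}$ (from the determinant), so $M_a:=T_aW_S^{NN'}T_{b(a)}^{-1}$ with $b(a)\equiv aN_S^{-1}\pmod{N'}$ is an integral matrix of Atkin--Lehner shape $\sabcd{N_Sx''}{y''}{Nz''}{N_Sw''}$ for level $N$; moreover $x''\equiv x\equiv 1\pmod{N_{\overline{S}}}$ and $y''\equiv y\equiv 1\pmod{N_S}$ (here one uses $(N_S,N')=1$ and $N'\mid N$, hence $N'\mid N_{\overline{S}}$), so Proposition \ref{prop:al}(i) gives $f|M_a=f|W_S^N$ with \emph{no} character factor, and the single reindexing $a\mapsto b(a)$ produces exactly $\overline{\alpha}(N_S)$.
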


\section{A vanishing criterion for cusp forms}\label{sctn:eichler--shimura}

In this section we prove several criteria that relate the vanishing of twisted $L$-values of a modular form $f$ to the vanishing of $f$. We recall some facts from the theory of modular symbols; for details see \cite{Merel1994} or \cite{Stein2007} \S 8.  Let $k$ be an integer $\geq 2$.  The space $\mathbb{M}_k(\Gamma_1(N))$ of modular symbols is generated by the Manin symbols $[P, g]$ where $P$ is a homogeneous polynomial in $\C[X,Y]$ of degree $k-2$, and $g \in \SL_2(\Z)$.  In fact the Manin symbols $[P, g]$ only depends on $P$ and the coset $\Gamma_1(N)g$.  By mapping a matrix $g$ to its bottom row modulo $N$, the cosets of $\Gamma_1(N)\backslash\SL_2(\Z)$ are in bijection with the set
\[E_N = \{(u,v) \in (\Z/N\Z)^2;\: \gcd(u, v, N) = 1\};\]
see for example \cite{Stein2007} Proposition 8.6.  Note that $\gcd(u, v, N)$ is well-defined, i.e. does not depend on the choice of representative of the residue classes $u$ and $v$.  We write $[P, (u, v)] = [P, g]$ for any $g \in \SL_2(\Z)$ with bottom row congruent to $(u, v)$ modulo $N$.
For $f\in\mathcal{S}_k(\Gamma_1(N))$ let $\xi_f$ be the map
\[
\xi_f([P,g]) = \int_{g0}^{g\infty} f(z) (gP)(z,1)dz,
\]
with $g \in \SL_2(\Z)$ acting on $P\in\C[X, Y]$ by $(gP)(X,Y)= P(g^{-1}(X,Y)^T)$.\\

Using the generators $[X^j Y^{k-2-j}, (u, v)]$, where $0 \leq j \leq k-2$ and $(u, v) \in E_N$, we define $\xi_f(j; u, v) = \xi_f([X^j Y^{k-2-j}, (u, v)])$.  The Manin symbol relations from \cite[Theorem 8.4]{Stein2007} translate to
\begin{align}
\label{eq:manin-symb-rel1} &\xi_f(j; u, v) + (-1)^j \xi_f(k-2-j; v, -u) = 0,\\
\label{eq:manin-symb-rel2} &\xi_f(j; u, v) + \sum_{i=0}^{k-2-j}(-1)^{k-2-i} \binom{k-2-j}{i} \xi_f(i; v, -u-v) \nonumber \\
&\qquad + \sum_{i=k-2-j}^{k-2} (-1)^i \binom{j}{i-k+2+j}\xi_f(i; -u-v, u) = 0,\\
\label{eq:manin-symb-rel3} &\xi_f(j; u, v) - (-1)^{k-2} \xi_f(j, -u, -v)=0,
\end{align}
There is an involution $\iota$ of $\mathbb{M}_k(\Gamma_1(N))$, namely $\iota [X^j Y^{k-2-j}, (u, v)] = (-1)^{j+1}[X^j Y^{k-2-j}, (-u, v)]$.  Accordingly, we define
\[\xi_f^{\pm}(j; u, v) := \frac{\xi_f(j; u, v) \pm (-1)^{j+1} \xi_f(j; -u, v)}{2}.\]
The relations \eqref{eq:manin-symb-rel1}, \eqref{eq:manin-symb-rel2}, and \eqref{eq:manin-symb-rel3} hold because of a relation on the underlying Manin symbols, see \cite{Stein2007} \S 8.2.1.  One can apply $\iota$ to these relations for the Manin symbols to obtain another set of relations.  Applying $\xi_f$ and adding or subtracting as appropriate, we see that \eqref{eq:manin-symb-rel1}, \eqref{eq:manin-symb-rel2}, and \eqref{eq:manin-symb-rel3} hold for $\xi$ replaced by $\xi_f^{\pm}$.\\

By \cite{Merel1994} Proposition 8 the maps $f\mapsto \xi_f^+$ and $f\mapsto \xi_f^-$ are injective, so $f$ vanishes if all $\xi_f^{\pm}(j; u, v)$ do.  Note also that the $\xi_f(j; u, v)$ are related to critical values of $L$-functions: Indeed, taking $g = \left(\begin{smallmatrix} a & b \\ c & d \end{smallmatrix}\right)\in \SL_2(\Z)$ with $(c, v) \equiv (u, v) \bmod N$ we have 
\begin{equation}\label{eqn:xif-as-l-value}\xi_f(j; u, v) = \frac{j!}{(-2\pi i)^{j+1}} L(f|g, j+1),\end{equation}
where $L(f|g,j+1)$ is defined as follows. The $L$-series of a cusp form $F=\sum_{n\in\Q_{>0}} b_n e^{2\pi i n \tau}$ for a congruence subgroup is
\[
L(F,s) = \sum_{n\in\Q_{>0}} \frac{b_n}{n^s}.
\]
It converges for $\Re s\gg 0$ and can be extended analytically to the whole complex plane. We also denote this extended function, the $L$-function of $F$, by $L(F,s)$. The main goal of this section is to prove Theorem \ref{thm:eichler-shimura-triv-char}, which is a vanishing criterion for $f$ in terms of vanishing of certain twisted $L$-functions.  The result is in the spirit of Corollaire 2 of \cite{Merel2009}, although we require some modifications since we do not assume that $f$ is a newform, or even an eigenfunction of almost all Hecke operators.  First we recall an identity from the proof of Proposition 6 in \cite{Merel2009}:

\begin{lem}\label{lem:merel-coset-reps}  Let $N \in \N$, let $(u, v) \in E_N$, let $S$ denote the set of prime divisors of $N$ which divide $u$, let $\overline{S}$ denote the remaining prime divisors of $N$, and let $N'$ be the order of $uv$ in $\Z/N\Z$.  Let $g = \left(\begin{smallmatrix} a & b \\ c & d \end{smallmatrix}\right) \in \SL_2(\Z)$ be such that $(c, d) \equiv (u, v) \bmod N$.  Then
\[\Gamma_1(N)g = \Gamma_1(N) \begin{pmatrix} 0 & -1 \\ N & 0 \end{pmatrix} \begin{pmatrix} 1 & \frac{n}{N} \\ 0 & 1 \end{pmatrix} \begin{pmatrix} A & B \\ C & D \end{pmatrix} \begin{pmatrix} N N'_S & 0 \\ 0 & N_S\end{pmatrix}^{-1},\]
where $n$ is chosen so that $n \equiv uv \bmod N_{\overline{S}}$ and $n \equiv -uv \bmod N_S$, and $\left(\begin{smallmatrix} A & B \\ C & D \end{smallmatrix}\right) \in \Z^{2 \times 2}$ satisfies $AD - BC = N_S N'_S$, $A \equiv uN'_S \bmod N_{\overline{S}}$, $B \equiv v/N_{\overline{S}} \mod N_S$, and $N_S N'_S \mid A$, $N_S N'_S \mid D$, $N N' \mid C$, $N_{\overline{S}} N'_{\overline{S}} \mid B$.
\end{lem}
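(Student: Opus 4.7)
My approach is direct computation. Since the bottom-row map induces a bijection $\Gamma_1(N)\backslash\SL_2(\Z)\xrightarrow{\sim}E_N$ (\cite{Stein2007} Proposition 8.6), and $g$ has bottom row $(u,v)\bmod N$ by hypothesis, it suffices to show that the matrix $M$ on the right-hand side of the claim lies in $\SL_2(\Z)$ and has bottom row congruent to $(u,v)\bmod N$.

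Carrying out the matrix multiplication gives
\[
M = \begin{pmatrix} 0 & -1 \\ N & n \end{pmatrix}\begin{pmatrix} A & B \\ C & D \end{pmatrix}\begin{pmatrix} 1/(NN'_S) & 0 \\ 0 & 1/N_S \end{pmatrix} = \begin{pmatrix} -C/(NN'_S) & -D/N_S \\ A/N'_S + nC/(NN'_S) & N_{\overline{S}}B + nD/N_S \end{pmatrix}.
\]
The divisibilities $NN'\mid C$, $N_SN'_S\mid A,D$, and $N_{\overline{S}}N'_{\overline{S}}\mid B$ make each entry of $M$ an integer, and by multiplicativity of the determinant, $\det M = N\cdot N_SN'_S/(NN_SN'_S)=1$, so $M\in\SL_2(\Z)$.

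The bottom row of $M$ is then checked modulo $N_S$ and $N_{\overline{S}}$ separately via the Chinese remainder theorem. The crucial auxiliary fact, coming from $N' = N/\gcd(N,uv)$, is that $N_S\mid uvN'_S$ and $N_{\overline{S}}\mid uvN'_{\overline{S}}$; combined with $n\equiv -uv\bmod N_S$ and $n\equiv uv\bmod N_{\overline{S}}$, this forces $nC/(NN'_S)\equiv 0\bmod N_{\overline{S}}$ and $nD/N_S\equiv 0\bmod N_S$. Together with the hypothesized congruences $A\equiv uN'_S\bmod N_{\overline{S}}$ and $B\equiv v/N_{\overline{S}}\bmod N_S$, this yields $M_{21}\equiv u\bmod N_{\overline{S}}$ and $M_{22}\equiv v\bmod N_S$ at once.

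The remaining two congruences, $M_{21}\bmod N_S$ and $M_{22}\bmod N_{\overline{S}}$, form the technical heart of the proof: the hypotheses do not directly specify $C\bmod N_S$ or $D\bmod N_{\overline{S}}$, and I would extract these from the determinant relation. Writing $A=N_SN'_SA'$, $B=N_{\overline{S}}N'_{\overline{S}}B'$, $C=NN'C'$, $D=N_SN'_SD'$, the identity $AD-BC=N_SN'_S$ reduces to $N_SN'_SA'D'-N_{\overline{S}}^2N'^2_{\overline{S}}B'C'=1$. Reducing this modulo $N_S$ determines $C'\bmod N_S$ in terms of $B'$ (the quantity $N_{\overline{S}}N'_{\overline{S}}$ being invertible there), and substituting produces $nC/(NN'_S)\equiv u\bmod N_S$, so $M_{21}\equiv u\bmod N_S$; reducing modulo $N_{\overline{S}}$ determines $D'\bmod N_{\overline{S}}$ in terms of $A'$ analogously, yielding $M_{22}\equiv v\bmod N_{\overline{S}}$. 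The main obstacle is precisely this bookkeeping with the $S$/$\overline{S}$ decomposition, and in particular the reliance on the determinant relation to pin down the residues of $C$ and $D$ not already prescribed by the listed hypotheses; once the Chinese remainder decomposition is set up and the identity $N_S\mid uvN'_S$ is in hand, the verification is mechanical.
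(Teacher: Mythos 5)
Your proposal is correct and is exactly the verification the paper indicates and then omits: the remark following the lemma states that the proof ``is simply a matter of checking that the matrix on the right hand side is integral with determinant one and with bottom row congruent to $(u,v)$ modulo $N$''. Your computations all check out, including the key steps of using $N_S \mid uvN'_S$, $N_{\overline{S}} \mid uvN'_{\overline{S}}$ and of extracting the residues of $C \bmod N_S$ and $D \bmod N_{\overline{S}}$ from the determinant relation combined with the congruences on $A$ and $B$ and the invertibility of $v \bmod N_S$ and $u \bmod N_{\overline{S}}$.
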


The existence of $A, B, C, D$ follows from the Chinese Remainder Theorem.  We omit the proof of this identity, which is simply a matter of checking that the matrix on the right hand side is integral with determinant one and with bottom row congruent to $(u, v)$ modulo $N$ (whence the conditions on $A, B, C, D$).\\

Let $N$ be a positive integer and let $T$ be the set of prime divisors $p$ of $N$ with $v_p(N)=1$, so
\[N=\prod_{p\in T} p \prod_{p\in\overline{T}} p^{v_p(N)},\]
where the exponents $v_p(N)$ for $p\in\overline{T}$ are all greater than $1$.
\begin{thm}\label{thm:eichler-shimura-triv-char}
Let $\epsilon\in\{0,1\}$, $N \in \N$, $k \geq 2$, and let $f \in \mathcal{S}_k^{\text{new}}(N)$ be an eigenfunction of all Atkin--Lehner operators $W_{p}^N$ for $p\in T$, with $T$ as above.  Assume that $L(f_{\alpha} | W_S^{NM}, j+1) = 0$ for all primitive characters $\alpha$ modulo $M \mid N$, all $j=0, 1, ..., k-2$ such that $\alpha(-1)=(-1)^{j+\epsilon}$, and all sets $S\subseteq \overline{T}$ of prime divisors $p$ that divide $\frac{N}{M}$.  Then $f=0$.
\end{thm}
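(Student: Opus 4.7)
The plan is to prove this by showing that all modular symbols $\xi_f^{\pm}(j; u, v)$ vanish, since by Merel's injectivity result (\cite{Merel1994} Proposition 8) this forces $f = 0$. We fix a choice of sign $\epsilon \in \{0, 1\}$ throughout and work with $\xi_f^{(-1)^{\epsilon}} := \xi_f^+$ if $\epsilon = 1$ and $\xi_f^-$ if $\epsilon = 0$ (the choice that extracts characters with $\alpha(-1) = (-1)^{j+\epsilon}$). Fix $j \in \{0, \ldots, k-2\}$ and $(u,v) \in E_N$. Let $S$ denote the set of prime divisors of $N$ dividing $u$, let $M = N/N'_{S}N_{\overline{S}}$ type quantities be the relevant moduli, and choose $g \in \SL_2(\Z)$ with bottom row congruent to $(u, v) \bmod N$. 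By \eqref{eqn:xif-as-l-value} we have $\xi_f(j; u, v) = \frac{j!}{(-2\pi i)^{j+1}} L(f | g, j+1)$, so it suffices to understand $f|g$.

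First I would apply Lemma \ref{lem:merel-coset-reps} to rewrite the coset $\Gamma_1(N) g$, which yields $f | g = f | W_N | \sabcd{1}{n/N}{0}{1} | \sabcd{A}{B}{C}{D} | \sabcd{NN'_S}{0}{0}{N_S}^{-1}$, using invariance of $f$ under $\Gamma_1(N)$. The diagonal rescaling factor is (up to a scalar that is absorbed into the slash action) a $B_d$-type operator, and under the identification $L(F(dz), s) = d^{-s-k/2} L(F, s)$ this contributes explicit Euler-like factors to the $L$-value. The crucial content is the translation $\sabcd{1}{n/N}{0}{1}$ applied to $f | W_N \in \mathcal{S}_k(N)$. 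Here I would invoke the Fourier inversion formula \eqref{eqn:fourier-inversion} to expand this translation as a sum over Dirichlet characters $\alpha$ modulo the relevant divisor of $N$, producing a linear combination of $S_\alpha(f | W_N)$. Restricting to primitive $\alpha$ of conductor $M \mid N/\gcd(\ldots)$ via \eqref{eqn:twists-for-prim-char}, each term becomes a Gauss sum times $(f | W_N)_\alpha$.

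Next I would commute the twist past the Atkin--Lehner operator $W_N$. Writing $W_N = W_S^N \cdot W_{\overline{S}}^N$ and using Proposition \ref{prop:twists-commute-with-al-and-hecke}, the twist $(f | W_N)_\alpha$ equals (up to scalars of the form $\chi_?(\cdot)$ and $\overline{\alpha}(\cdot)$) a form of the shape $f_\alpha | W_{S'}^{NM}$ for various subsets $S'$ of prime divisors of $N$. Substituting these rearrangements back into the expression for $L(f|g, j+1)$, the whole thing becomes a linear combination, indexed by primitive characters $\alpha$ and by subsets $S'$ of primes dividing $N/M$, of critical values $L(f_\alpha | W_{S'}^{NM}, j+1)$. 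Finally I would form the $\xi_f^\pm$ combination: the signed average selects precisely the characters $\alpha$ with $\alpha(-1) = (-1)^{j+\epsilon}$, killing the opposite parity. For the Atkin--Lehner indices $S'$ that contain a prime $p \in T$, I would use the hypothesis that $f$ is an eigenfunction of $W_p^N$ (and the cleanness of $W_p^N$ on a newform when $p \| N$) to replace $W_{S'}^{NM}$ with $W_{S' \setminus \{p\}}^{NM}$ up to a scalar Atkin--Lehner eigenvalue; iterating this peels off all primes of $T$ and reduces to the case $S' \subseteq \overline{T}$. Invoking the vanishing hypothesis on all such $L(f_\alpha | W_{S}^{NM}, j+1)$ then collapses every term to zero, forcing $\xi_f^{\pm}(j; u, v) = 0$ for all choices, hence $f = 0$.

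The main obstacle I anticipate is the bookkeeping of constants and indices: tracking how the diagonal rescaling in Lemma \ref{lem:merel-coset-reps} interacts with the $B_d$-structure and contributes powers of $N_S, N'_S$; tracking how the Gauss sums, the $\alpha$-factors from Proposition \ref{prop:twists-commute-with-al-and-hecke}, and the Atkin--Lehner eigenvalues for $p \in T$ combine; and verifying that the subsets $S'$ appearing in the final sum really exhaust all subsets of primes dividing $N/M$ in $\overline{T}$ that the hypothesis covers. The parity separation by $\xi_f^\pm$ must also be checked to precisely match the constraint $\alpha(-1) = (-1)^{j+\epsilon}$, which amounts to a careful sign chase through the involution $\iota$.
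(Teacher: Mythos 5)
Your overall strategy (Merel's coset decomposition, Fourier inversion into twists, reduction to the hypothesised $L$-values, parity selection via $\xi^{\pm}$) matches the paper's, but there is one step in your plan that does not go through as stated and it is exactly the delicate point the theorem is designed around. You compute $\xi_f(j;u,v)$ directly, so after applying Lemma \ref{lem:merel-coset-reps} the form inside the $L$-value is $f|W_N$ translated, and you propose to pass the twist through the Atkin--Lehner operator by writing $W_N = W_S^N W_{\overline{S}}^N$ and ``commuting the twist past $W_N$'' via Proposition \ref{prop:twists-commute-with-al-and-hecke}. That proposition only commutes $S_\alpha$ past $W_S^{NN'}$ when $N_S$ is \emph{coprime to the modulus of $\alpha$}; at primes dividing the conductor of $\alpha$ no such commutation is available (for a newform it would involve Atkin--Li pseudo-eigenvalues of twists, and here $f$ is not assumed to be a newform -- it is only an eigenfunction of $W_p^N$ for $p \in T$). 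This is precisely the obstruction discussed in the introduction, so your reduction of $(f|W_N)_\alpha$ to forms of the shape $f_\alpha|W_{S'}^{NM}$ has a genuine gap. The paper's way around it is to apply the symbol map to $f|W_N$ rather than to $f$: since $f|W_N|W_N = f$, the $\left(\begin{smallmatrix}0&-1\\N&0\end{smallmatrix}\right)$ coming from Lemma \ref{lem:merel-coset-reps} cancels, the Fourier inversion then produces twists $S_\alpha(f)$ of $f$ itself, and the only Atkin--Lehner operator that ever appears is $W_S^{NN'}$ coming from the matrix $\sabcd{A}{B}{C}{D}$, where $S$ consists of primes dividing $u$. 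The subsequent commutations of twists past Atkin--Lehner operators are then performed only at primes $p \in S \cap T$, which are automatically coprime to the conductor $M$ (because $v_p(N)=1$ and $p \mid u$ force $p \nmid N'$), so Proposition \ref{prop:twists-commute-with-al-and-hecke} applies legitimately; this is how the operators $W_S^{NM}$ with $S \subseteq \overline{T}$ in the hypothesis arise. Since $W_N$ is invertible, $\xi^{\pm}_{f|W_N} \equiv 0$ still gives $f=0$ by Merel's injectivity.

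A secondary point you gloss over: the Fourier inversion \eqref{eqn:fourier-inversion} runs over \emph{all} characters modulo $N'$, not just primitive ones, and \eqref{eqn:twists-for-prim-char} does not let you ``restrict to primitive $\alpha$''. The paper handles the imprimitive $\alpha$ via Lemma \ref{lem:twist-by-non-prim-char}, which expresses $S_\alpha(f)$ in terms of $S_\beta(f)$ and $S_\beta(f|U_p)$; this is where the hypotheses on $f$ are used again, since $f$ is a $U_p$-eigenfunction for $p \in T$ (by Proposition \ref{prop:al-hecke-connection}, because it is a $W_p^N$-eigenfunction) and $U_p$ annihilates $\mathcal{S}_k^{\mathrm{new}}(N)$ for $p \in \overline{T}$. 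Without this step the $L$-values you produce are not yet multiples of the $L(f_{\alpha_0}|W_S^{NM}, j+1)$ covered by the hypothesis. The remaining ingredients of your sketch (the diagonal rescaling contributing $(N_{\overline{S}}N_S')^{j+1-k/2}$, the sign chase showing the $\pm$ average selects $\alpha(-1)=(-1)^{j+\epsilon}$, and peeling off primes of $T$ from $S$ using the Atkin--Lehner eigenvalue) are in line with the paper's proof.
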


\begin{proof}  We will present the argument for the case $\epsilon=1$, which uses the function $\xi^+$.  The other case, using $\xi^{-}$, is almost idential, the only difference being which characters cancel in \eqref{eq:period-calculation-pm}.  We will show that the conditions in the theorem imply $\xi^+_{f|W_N}(j; u, v) = 0$ for all $j = 0, 1, ..., k-2$ and $(u, v) \in E_N$, which in turn implies that $f=0$.  Let us therefore fix $(u, v) \in E_N$ and consider $\xi^+_{f|W_N}(j; u, v)$. As in the statement of Lemma \ref{lem:merel-coset-reps}, let $S$ be the set of those prime divisors of $N$ that divide $u$ and write $N'$ for the order of $uv$ in $\Z/N\Z$.   Note that every prime in $S$ divides $\frac{N}{N'}$. Choose $g = \left(\begin{smallmatrix} a & b \\ c & d \end{smallmatrix}\right) \in \SL_2(\Z)$ such that $(c, d) \equiv (u, v) \bmod N$.  By Lemma \ref{lem:merel-coset-reps} we have
\begin{equation}\label{eqn:Gamma_1(N)g-reps}
\Gamma_1(N)g = \Gamma_1(N)\begin{pmatrix}0&-1\\ N& 0\end{pmatrix}
\begin{pmatrix}1&\frac{n}{N}\\0&1\end{pmatrix}
\begin{pmatrix}A&B\\C&D\end{pmatrix}\begin{pmatrix}N N_S' & 0 \\ 0 & N_S\end{pmatrix}^{-1},
\end{equation}
with $A, B, C, D$ and $n$ satisfying the conditions of the lemma.  Since $f | W_N | W_N = f$, we have
\[ f|W_N|g = f | \begin{pmatrix} 1 & \frac{n}{N} \\ 0 & 1 \end{pmatrix} \begin{pmatrix}A&B\\C&D\end{pmatrix}\begin{pmatrix}NN_S' & 0 \\ 0 & N_S\end{pmatrix}^{-1}.\]
Now $n \equiv uv \bmod N_{\overline{S}}$ and $n \equiv -uv \bmod N_S$, so $n$ also has order $N'$ modulo $N$.  Hence $nN' = n' N$ for some $n'$ which is coprime to $N'$.  Writing this as $n/N = n'/N'$ and using \eqref{eqn:fourier-inversion} we get
\[f|W_N|g = \sum_{\alpha \bmod N'}\frac{\alpha(n')}{\phi(N')}S_\alpha(f)| \begin{pmatrix}A&B\\C&D\end{pmatrix}\begin{pmatrix}N N'_S & 0 \\ 0 & N_S\end{pmatrix}^{-1},
\]
where $\alpha$ varies over all Dirichlet characters modulo $N'$.  By Proposition \ref{prop:twists-commute-with-al-and-hecke} we have $S_\alpha(f)\in \mathcal{S}_k(NN',\alpha^2)$, and the conditions of Lemma \ref{lem:merel-coset-reps} together with Proposition \ref{prop:al} give
\[S_\alpha(f) | \begin{pmatrix} A & B \\ C & D \end{pmatrix} = \overline{\alpha^2_S}(B) \overline{\alpha^2_{\overline{S}}}\left(\frac{A}{N_S N'_S}\right) S_\alpha(f) | W_S^{NN'}.\]
Hence, using \eqref{eqn:xif-as-l-value},
\[\begin{aligned}
&\xi_{f|W_N}(j;u,v) \\
&\qquad =\frac{j!}{(-2\pi i)^{j+1}\phi(N')} \sum_{\alpha} \alpha(n')\overline{\alpha^2_S}(B) \overline{\alpha^2_{\overline{S}}}\left(\frac{A}{N_SN'_S}\right) L\left(S_\alpha(f)|W_S^{NN'}|\begin{pmatrix}\frac{1}{N N'_S }&0\\ 0&\frac{1}{N_S}\end{pmatrix},j+1\right)\\
\label{eq:period-calculation}
&\qquad = \frac{j! (N_{\overline{S}}N'_S)^{j+1-\frac{k}{2}}}{(-2\pi i)^{j+1}\phi(N')} \sum_{\alpha} \alpha(n')\overline{\alpha^2_S}(B) \overline{\alpha^2_{\overline{S}}}\left(\frac{A}{N_S N'_S}\right) L\left(S_\alpha (f)|W_S^{NN'},j+1\right),
\end{aligned}\]
where the sum is over all characters modulo $N'$. Here we used that for any cusp form $G$ of weight $k$ for a congruence subgroup, we have $L(G|\left(\begin{smallmatrix}t_1&0\\0&t_2\end{smallmatrix}\right),s) = (t_1/t_2)^{k/2-s}L(G,s)$.\\  

To compute $\xi_{f|W_N}(j;-u, v)$ we proceed analogously with $\tilde{g} = \left(\begin{smallmatrix} a & -b \\ -c & d \end{smallmatrix}\right)$, since this has bottom row $(-c, d) \equiv (-u, v) \bmod N$.  With $A, B, C, D, n$ as in \eqref{eqn:Gamma_1(N)g-reps} we see that 
\begin{equation}\label{eqn:Gamma_1(N)tildeg-reps}
\Gamma_1(N)\tilde{g} = \Gamma_1(N)\begin{pmatrix}0&-1\\ N& 0\end{pmatrix}
\begin{pmatrix}1&-\frac{n}{N}\\0&1\end{pmatrix}
\begin{pmatrix}-A&B\\C&-D\end{pmatrix}\begin{pmatrix} N N'_S & 0 \\ 0 & N_S \end{pmatrix}^{-1}.\end{equation}
The argument is as above, with $n'$ replaced by $-n'$, and each individual summand in the final expression for $\xi_{f|W_N}(j; u, v)$ changes by a factor of $\alpha(-1)\overline{\alpha_{\overline{S}}^2}(-1) = \alpha(-1)$.  From the definition of $\xi^{+}_{f|W_N}$ we then see
\begin{align}\label{eq:period-calculation-pm}
\xi^{+}_{f|W_N}(j;u,v)=\frac{j! (N_{\overline{S}}N'_S)^{j+1-\frac{k}{2}}}{(-2\pi i)^{j+1} \phi(N')} \sum_{\alpha} \alpha(n')\overline{\alpha^2_S}(B) \overline{\alpha^2_{\overline{S}}}\left(\frac{A}{N_S N'_S}\right)
L\left(S_\alpha(f)|W_S^{NN'},j+1\right).
\end{align}
where the sum is over all characters $\alpha$ modulo $N'$ with $\alpha(-1)= (-1)^{j+1}$.\\

The next step is to relate $S_\alpha(f)$ to the twist by the primitive character underlying $\alpha$.  The key to this is the following lemma, which can be proved by a direct computation:

\begin{lem}\label{lem:twist-by-non-prim-char}
Let $N$ and $k$ be positive integers, let $\chi$ be a Dirichlet character modulo $N$, and let $f \in \mathcal{S}_k(N, \chi)$.  Let $N' \in \N$, let $\alpha$ be a character modulo $N'$ with conductor $M$.  Assume that $M<N'$, let $p$ be any prime dividing $N'/M$, and let $\beta$ be the character modulo $N'/p$ inducing $\alpha$.  Then
\[ S_\alpha(f) = p^{1-k/2} S_\beta(f|U_p) | \begin{pmatrix} p& 0 \\ 0 & 1 \end{pmatrix} - \overline{\beta}(p) S_\beta(f).\]
\end{lem}

In our case $f \in \mathcal{S}_k^{\text{new}}(N)$ is an eigenfunction of each $W_p^N$ for $p\in T$, so by Proposition \ref{prop:al-hecke-connection} it is also an eigenfunction of $U_p$ for each $p \in T$. By the same proposition if $p\in\overline{T}$, i.e., $p^2|N$, then $U_p$ is the zero operator on $\mathcal{S}_k^{\text{new}}(N)$, so $f$ is trivially an eigenfunction of $U_p$ in that case. In both cases we denote the $U_p$-eigenvalue of $f$ by $a_p$.  Then Lemma \ref{lem:twist-by-non-prim-char} gives
\[S_{\alpha}(f) = p^{1-k/2} a_p S_\beta(f) | \begin{pmatrix} p & 0 \\ 0 & 1 \end{pmatrix} - \overline{\beta}(p) S_\beta(f),\]
and so
\[L(S_{\alpha}(f) | W_S^{NN'}, j+1) = (p^{-j} a_p - \overline{\beta}(p)) L(S_\beta(f) | W_S^{NN'}, j+1).\]
Applying this repeatedly we see that $L(S_\alpha(f) | W_S^{NN'}, j+1)$ is a multiple of $L(S_{\alpha_0}(f) | W_S^{NN'}, j+1)$, where $\alpha_0$ is the the primitive character modulo $M \mid N'$ inducing $\alpha$ modulo $N'$.  Next note that $S_{\alpha_0}(f)=G(\overline{\alpha_0})f_{\alpha_0} \in \mathcal{S}_k(NM, \alpha_0^2)$.  We then use $S_{\alpha_0}(f) | W_S^{NN'} = S_{\alpha_0}(f) | W_S^{NM} | B_d$, where $d = (\frac{N'}{M})_S$.  Thus $L(S_\alpha(f) | W_S^{NN'}, j+1)$ is a multiple of $L(f_{\alpha_0} | W_S^{NM}, j+1)$. If $S\subseteq \overline{T}$, then $L(f_{\alpha_0} | W_S^{NM}, j+1)=0$, as this is one of the $L$-values that is assumed to vanish in the statement of the theorem.\\

Now suppose that there is a prime $p\in S\cap T$. Since $v_p(N)=1$ and $p$ divides $u$, we have $\gcd(p,N')=\gcd(p,M)=1$. Write $S = S'\cup\{p\}$ and set $\alpha'=(\alpha_0)_{S'}$, the $S'$-part of $\alpha_0$. By Propositions \ref{prop:al} and \ref{prop:twists-commute-with-al-and-hecke} we have
\[
f_{\alpha_0} | W_S^{NM} =  \alpha'^2(p) (f_{\alpha_0}|W_{p}^{NM})|W_{S'}^{NM} = \alpha'^2(p)\overline{\alpha_0}(p)((f|W_p^{NM})_{\alpha_0})|W_{S'}^{NM}.
\]
Since we assume that $f$ is an eigenfunction of $W_p^N$ we get that $f_{\alpha_0} | W_S^{NM}$ is a multiple of $f_{\alpha_0} | W_{S'}^{NM}$.  Applying this procedure for every prime $p\in S\cap T$ we deduce that $f_\alpha|W_S^{NM}$ is a multiple of $f_{\alpha_0}|W_{S''}^{NM}$, where $S''$ is a set of prime divisors of $N/M$ that is disjoint with $T$. Thus $L(f_\alpha|W_S^{NM},j+1)=0$, since we assume that $L(f_{\alpha_0}|W_{S''}^{NM},j+1)$ vanishes.\\

Since $\xi^{+}_{f|W_N}(j;u,v)$ is a sum of such $L$-values this shows that $\xi^{+}_{f|W_N}(j;u,v)=0$ for all $(u,v)\in E_N$, and hence $f=0$.

\end{proof}

Note that in Theorem \ref{thm:eichler-shimura-triv-char} Atkin--Lehner operators are used in two different ways: first at $p \in T$ where we insist $f$ is an eigenfunction, second at $S \subset \overline{T}$ where we insist the $L$-values of Atkin--Lehner images of twists vanish.  For our applications we will restrict to levels of the form $N=p^a q^b N'$ with $N'$ squarefree; a simple trick then allows us to do away with the latter use:

\begin{thm}\label{thm:eichler-shimura-paqb}  Let $\epsilon\in\{0,1\}$ and $N = p^a q^b N'$ where $p$ and $q$ are distinct primes, $a, b \in \Z_{\geq 0}\setminus \{1\}$, and $N'$ is squarefree and coprime to $pq$. Let $k \geq 2$, and let $f \in \mathcal{S}_k^{\text{new}}(N)$ be an eigenfunction of all Atkin-Lehner operators $W_\ell^N$ for primes $\ell|N'$. Assume that $L(f_\alpha, j+1) = 0$ for all characters $\alpha$ primitive modulo $M \mid N$ and all $j=0,...,k-2$ with $\alpha(-1)=(-1)^{j+\epsilon}$. Then $f=0$.\end{thm}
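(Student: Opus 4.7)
The plan is to apply Theorem~\ref{thm:eichler-shimura-triv-char} to $f$. Under our hypothesis on $N$, the set $T$ of primes $\ell \mid N$ with $v_\ell(N)=1$ is precisely the set of prime divisors of $N'$, so $\overline T\subseteq\{p,q\}$. Unwinding the proof of Theorem~\ref{thm:eichler-shimura-triv-char}, $f=0$ will follow once we establish $\xi^{\pm}_{f\vert W_N}(j;u,v)=0$ for every $(u,v)\in E_N$ and every $0\le j\le k-2$.

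The ``simple trick'' advertised in the introduction is to use the Manin symbol relations~\eqref{eq:manin-symb-rel1}--\eqref{eq:manin-symb-rel2}, which also hold for $\xi^{\pm}$, to reduce this vanishing to pairs $(u,v)\in E_N$ with $\gcd(u,\prod_{\ell\in\overline T}\ell)=1$. For such pairs, the set $S$ of primes of $N$ dividing $u$ is contained in $T$, so when we follow the computation leading to~\eqref{eq:period-calculation-pm} and the ensuing reduction in the proof of Theorem~\ref{thm:eichler-shimura-triv-char}, the resulting set $S'' = S\cap \overline T$ is empty. The $L$-values that appear therefore collapse to $L(f_{\alpha_0},j+1)$, which vanish by the hypothesis of the present theorem; this yields the desired vanishing of $\xi^{\pm}_{f\vert W_N}(j;u,v)$ on the subset $\gcd(u,\prod_{\ell\in\overline T}\ell)=1$.

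The main technical point is the Manin symbol reduction itself, which crucially uses that $|\overline T|\le 2$. Suppose some prime $p\in\overline T$ divides $u$; since $\gcd(u,v,N)=1$ and $p\mid N$, we have $p\nmid v$. If no other prime of $\overline T$ divides $v$, then~\eqref{eq:manin-symb-rel1} rewrites $\xi^{\pm}(j;u,v)$ in terms of $\xi^{\pm}(\,\cdot\,;v,-u)$, whose first coordinate is coprime to all primes of $\overline T$. Otherwise $\overline T=\{p,q\}$ and $q\mid v$, so $q\nmid u$ (again by $\gcd(u,v,N)=1$) and $-u-v$ is coprime to $pq$; relation~\eqref{eq:manin-symb-rel2} then rewrites $\xi^{\pm}(j;u,v)$ as a combination of symbols $\xi^{\pm}(\,\cdot\,;v,-u-v)$ and $\xi^{\pm}(\,\cdot\,;-u-v,u)$, and the first family can be converted via~\eqref{eq:manin-symb-rel1} into a symbol with first coordinate $-u-v$. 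The case $q\mid u$ is symmetric, and iterating removes all divisibility by primes of $\overline T$ from the first coordinate. The main obstacle I foresee is simply keeping this case analysis clean; after that the conclusion follows from Theorem~\ref{thm:eichler-shimura-triv-char}.
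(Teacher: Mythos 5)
Your proposal is correct and follows essentially the same route as the paper: symbols $\xi^{\pm}_{f|W_N}(j;u,v)$ with first coordinate coprime to the primes of $\overline{T}$ vanish directly because then $S\subseteq T$ and the $L$-values collapse to the hypothesised $L(f_{\alpha_0},j+1)$, the case where only $v$ is divisible by a prime of $\overline{T}$ is handled via relation \eqref{eq:manin-symb-rel1}, and the case where $u$ and $v$ are divisible by the two distinct primes of $\overline{T}$ (forcing $-u-v$ coprime to $pq$) is handled via relation \eqref{eq:manin-symb-rel2}, exactly as in the paper's proof. The only cosmetic difference is that no iteration is actually needed, since $|\overline{T}|\le 2$ makes one pass through the case analysis suffice.
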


\begin{proof}  We may assume that $a>1$ or $b>1$, in particular that the set $\overline{T} \subseteq\{p,q\} $ is non-empty, since otherwise this is just the statement of Theorem \ref{thm:eichler-shimura-triv-char}.  Let $(u,v)\in E_N$.  If no prime of $\overline{T}$ divides $u$ then the proof of Theorem \ref{thm:eichler-shimura-triv-char} shows that $\xi^+_{f|W_N}(j;u,v)$ is a linear combination of the $L$-values $L(f_{\alpha},j+1)$ so our assumptions give $\xi^+_{f|W_N}(j;u,v)=0$.  The same argument for $(v,-u)\in E_N$ shows that if no prime of $\overline{T}$ divides $v$ then $\xi^+_{f|W_N}(k-2-j;v,-u)=0$; we can then use the modular symbols relation \eqref{eq:manin-symb-rel1} to see that $\xi^+_{f|W_N}(j;u,v)=0$.\\

Now suppose that both $u$ and $v$ are divisible by a prime in $\overline{T}$. By the definition of $E_N$, it cannot be the case that the same prime divides both, so we are in the case when $a>1$ and $b>1$, and we may assume that $p$ divides $u$ and $q$ divides $v$.  Then the residue class $-u-v$ is divisible by neither $p$ nor $q$, so $\xi_f^{+}(i; v, -u-v) = \xi_f^+(i; -u-v, u)=0$ for all $0 \leq i \leq k-2$ by the above.  Hence using \eqref{eq:manin-symb-rel2} we obtain $\xi_f(j; u, v)=0$.\end{proof}

\begin{rmk}  More generally, an argument similar to the proof of Theorem \ref{thm:eichler-shimura-paqb} shows that one can restrict the sets $S \subset \overline{T}$ in Theorem \ref{thm:eichler-shimura-triv-char} to avoid two prescribed elements of $\overline{T}$.\end{rmk}

In the next section we will show that a cusp form $f$ in $\mathcal{S}^{\mathrm{new}}_k(N)$ is orthogonal to products of Eisenstein series if many twisted $L$-values of $f$ vanish. A technical difficulty arises in our application of Theorem \ref{thm:eichler-shimura-triv-char} when $k \geq 4$, where we cannot deduce that $L(f,2)$ and $L(f,k-2)$ vanish due to the fact that the weight two Eisenstein series with trivial character is not holomorphic. To this end we prove a result which states that the problematic cases are in fact already a consequence of the other assumptions:

\begin{prop}\label{prop:eichler-shimura-trivial-char}
Let $N \in \N$, $k \geq 4$ be even and $f\in \mathcal{S}^{\mathrm{new}}_k(N)$.  Assume that $L(f_\alpha, j+1)=0$ for all primitive characters $\alpha$ modulo $M \mid N$ where $M>1$, and all $j=0,\ldots,k-2$ with $\alpha(-1)=(-1)^{j+1}$.  Assume moreover that $L(f, j+1) = 0$ for all odd $j$ with $3\leq j\leq k-5$. Then $L(f, 2)=0$ and $L(f, k-2) = 0$ must hold as well.\end{prop}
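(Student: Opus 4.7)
Plan. I will follow the overall strategy of the proof of Theorem~\ref{thm:eichler-shimura-triv-char}, but restrict attention to pairs $(u, v) \in E_N$ whose first coordinate $u$ is coprime to $N$ (equivalently, the set $S$ of primes of $N$ dividing $u$ is empty). For such pairs, the Atkin--Lehner factor $W_S^{NN'}$ in formula~\eqref{eq:period-calculation-pm} becomes trivial, so that $\xi^+_{f|W_N}(j; u, v)$ is a linear combination of the untwisted values $L(S_\alpha(f), j+1)$. Iterating Lemma~\ref{lem:twist-by-non-prim-char} rewrites each such term as a combination of $L(f_{\alpha_0}, j+1)$ with $\alpha_0$ primitive of conductor dividing $N'$ (the order of $uv$ modulo $N$). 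Hypothesis~(A) kills every contribution with $\alpha_0 \neq \mathbf{1}$, and what is left is a known multiple of $L(f, j+1)$ (after reducing to a $W_N$-eigencomponent of $f$, using $W_N^2 = \mathrm{id}$ on $\mathcal{S}^{\mathrm{new}}_k(N)$ with trivial character). Hypothesis~(B) then forces this to vanish whenever $j$ is odd and lies in the interior range $3 \le j \le k-5$.

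The next step is to apply the modular symbol relation~\eqref{eq:manin-symb-rel2} at the pair $(u, v) = (1, 0) \in E_N$ (for which both $u = 1$ and $-u-v = -1$ are units modulo $N$) with $j = 1$. The left-hand side $\xi^+_{f|W_N}(1; 1, 0)$ is an explicit non-zero multiple of $L(f, 2)$, computable directly as $\frac{N^{2-k/2}}{(-2\pi i)^2}L(f,2)$. Every symbol occurring on the right-hand side is of one of the forms $\xi^+_{f|W_N}(i; 0, -1)$ or $\xi^+_{f|W_N}(i; -1, 1)$; in each case the first coordinate is either $0$ or a unit modulo $N$, so the analysis above applies and the symbol vanishes unless $i \in \{1, k-3\}$. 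What remains is a homogeneous linear equation relating $L(f, 2)$ and $L(f, k-2)$. Repeating the same construction with $j = k-3$ at the same pair $(u, v)$ produces a second homogeneous linear equation in these two unknowns.

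The crux of the proof is to verify that this $2 \times 2$ linear system is non-degenerate, so that its only solution is $L(f, 2) = L(f, k-2) = 0$. This amounts to an explicit computation of the coefficients, which depend only on $N$, $k$, the binomial factors in~\eqref{eq:manin-symb-rel2}, and the $W_N$-eigenvalue $\epsilon = \pm 1$ of the eigencomponent of $f$ under consideration. The main anticipated obstacle is ruling out accidental degeneracies of this system for certain $(N, k, \epsilon)$; if the pair $(u, v) = (1, 0)$ fails to yield two independent relations in a given case, further applications of~\eqref{eq:manin-symb-rel2} (or~\eqref{eq:manin-symb-rel1}) at other pairs $(u, v) \in E_N$ whose first coordinate is a unit modulo $N$ should supply the missing constraints, since each such pair contributes another linear equation among $L(f, 2)$ and $L(f, k-2)$ via the same mechanism.
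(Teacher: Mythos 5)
You have the right skeleton (the step-1 vanishing of $\xi^+_{f|W_N}(i;u,v)$ for $i\neq 1,k-3$ at suitable pairs, then Manin relations to pin down the two remaining values), but the step you yourself identify as the crux — non-degeneracy of your $2\times 2$ system — is not established, and with your choice of instances of \eqref{eq:manin-symb-rel2} it cannot be checked in the way you propose. Applying \eqref{eq:manin-symb-rel2} at $(u,v)=(1,0)$ with $j=1$ gives, after step 1, an identity of the shape $\xi^+_{f|W_N}(1;1,0)-(k-3)\xi^+_{f|W_N}(1;0,-1)-\xi^+_{f|W_N}(k-3;0,-1)-\xi^+_{f|W_N}(k-3;-1,1)=0$, and similarly for $j=k-3$; in both cases the pair $(-1,1)$ survives at an exceptional index. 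For that pair $uv$ has order $N$, so by \eqref{eq:period-calculation-pm} the symbol is a character sum over all $\alpha$ modulo $N$: hypothesis (A) kills the nontrivial primitive twists, but the remaining untwisted contribution is $L(f,k-2)$ (resp. $L(f,2)$) multiplied by a coefficient built from the $\alpha(n')$-factors and from iterating Lemma \ref{lem:twist-by-non-prim-char}, i.e. from factors of the form $p^{-i}a_p-\overline{\beta}(p)$ involving the Fourier coefficients of $f$ (and whose derivation in the paper even uses that $f$ is a $U_p$-eigenfunction). So the coefficients of your linear system do \emph{not} depend only on $N$, $k$, binomials and a sign $\epsilon$; they depend on $f$ and can vanish, and your fallback of trying further base pairs does not help, because every instance of \eqref{eq:manin-symb-rel2} that involves a pair with a zero coordinate automatically also involves a pair of type $(\pm1,\mp1)$, so the same contamination recurs whenever that pair sits at an exceptional index. (Separately, your ``reduce to a $W_N$-eigencomponent'' step is not justified — the hypotheses are linear conditions on $f$ and do not pass to the eigencomponents — though it is also unnecessary; and the case $k=4$, where $1=k-3$ and hypothesis (B) is vacuous, is not addressed.)

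The paper's proof differs from yours exactly at this point, and this is the missing idea: it applies \eqref{eq:manin-symb-rel2} at the \emph{non-exceptional} index $j=2$ (and at $j=0$ when $k=4$), with the base pair chosen so that the $(\pm1,\mp1)$-type pair occurs only as the leading symbol, where it vanishes by step 1. Then the only surviving exceptional-index symbols sit at pairs $(\pm1,0)$ and $(0,\pm1)$, for which $uv=0$, there is no character sum, and the symbol is a single explicit nonzero multiple of $L(f,2)$ or of $L(f|W_N,k-2)\propto L(f,2)$ (resp. $L(f,k-2)$ via the functional equation, which is what \eqref{eq:manin-symb-rel1} encodes on these symbols). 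The resulting relations have the explicit coefficients $-(k-4)$, $-2$, $\pm1$ and combine to $-(k-2)\,\xi^+_{f|W_N}(\cdot)=0$, so non-degeneracy is automatic for all $N$, $k\geq 6$, with $k=4$ handled by a single application of \eqref{eq:manin-symb-rel2} with $j=0$. Without redoing your argument along these lines (choosing the relation instances so that the $(\pm1,\mp1)$ pairs never appear at indices $1$ or $k-3$), the proposal does not yield a proof.
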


\begin{proof}  As in the proof of Theorem \ref{thm:eichler-shimura-paqb} we see $\xi_{f|W_N}^+(j; u, v) = 0$ as long as $j \neq 1, k-3$ and at least one of $\gcd(u, N) = 1$ or $\gcd(v, N)=1$ holds.  First assume that $k \geq 6$.  Applying \eqref{eq:manin-symb-rel2} with $j=2$, $(v, -u-v) = (0, 1)$ and using the vanishing we just observed, we get
\[-(k-4) \xi_{f|W_N}^+(1; 0, 1) - 2 \xi_{f|W_N}^+(k-3; 1, 0) = 0.\]
Relation \eqref{eq:manin-symb-rel1} with $j=1$, $(u, v) = (0, 1)$ gives
\[\xi_{f|W_N}^+(1; 0, 1) - \xi_{f|W_N}^+(k-3; 1, 0) = 0,\]
hence
\[\xi_{f|W_N}^+(1; 0, 1) = 0,\]
since $k \geq 6$.  Since $\xi_{f|W_N}^+(1; 0, 1) = \xi_{f|W_N}(1; 0, 1)$, \eqref{eqn:xif-as-l-value} gives $L(f|W_N, 2) = 0$, hence $L(f, k-2)=0$ by the functional equation of $L(f,s)$.  The other case follows a similar argument, applying \eqref{eq:manin-symb-rel2} with $j=2$ and $(v, -u-v) = (1, 0)$ and \eqref{eq:manin-symb-rel1} with $j=1$ and $(u, v) = (1, 0)$ we get $\xi_{f|W_N}^+(k-3; 0, 1)=0$, hence $L(f, 2)=0$.\\

For $k=4$, apply \eqref{eq:manin-symb-rel2} with $j=0$, $(v, -u-v)=(0, 1)$ to get
\[\xi_{f|W_N}^{+}(1; 0, 1) = \frac{1}{(-2\pi i)^2}L(f,2)=
 0.\]
\end{proof}

\section{Generating spaces of cusp forms by products of Eisenstein series}\label{sctn:generating-cusp-forms}

We begin by recalling the theory of Eisenstein series as developed in \cite{Miyake2006} \S 7.  Let $N \in \N$, $l \in \N$, and let $\phi$ and $\psi$ be Dirichlet characters modulo $N_1$ and $N_2$ respectively such that $N_1N_2=N$.  We assume that $\phi(-1)\psi(-1) = (-1)^l$, and we extend $\phi$ to a character of $\Gamma_0(N)$ by $\phi\left(\left(\begin{smallmatrix} a & b \\ c & d \end{smallmatrix}\right)\right) = \phi(d)$, and similarly for $\psi$.  Define the Eisenstein series
\[E_l^{\phi,\psi}(z, s) = \frac{(l-1)!N_2^l}{(-2\pi i)^l G(\overline{\psi_0})}\sum_{\substack{(c,d)\in\Z^2\setminus\{(0,0)\}}}\frac{\phi(c)\overline{\psi}(d)}{(Ncz+d)^l|Ncz+d|^{2s}},
\]
where $\psi_0$ is the primitive character that induces $\psi$. It converges uniformly and absolutely for $l + 2\Re(s) \geq 2+\epsilon$, for any $\epsilon>0$.
In the region of absolute convergence this satisfies the transformation law 
\begin{equation}\label{eqn:analytic-eis-trans}E_l^{\phi,\psi}(\delta z, s) = \phi(\delta)\psi(\delta) j(\delta, z)^l \abs{j(\delta, z)}^{2s} E_l^{\phi,\psi}(z, s)\end{equation}
for $\delta \in \Gamma_0(N)$. The function $E_l^{\phi, \psi}(z, s)$ can be analytically continued in the $s$-variable to $s=0$ (see \cite[\S 7]{Miyake2006}), so we can define $E_{l}^{\phi,\psi}(z) = E_l^{\phi,\psi}(z, 0)$.  Moreover, unless $l=2$ and $\psi$ is principal, the value at $s=0$ is a holomorphic function of $z$, so \eqref{eqn:analytic-eis-trans} along with appropriate growth estimates shows that in fact $E_l^{\phi,\psi} \in \mathcal{M}_l(N, \phi\psi)$.\\

If $\phi$ and $\psi$ are primitive, the Fourier expansion of $E_l^{\phi,\psi}$ can be deduced from Theorems 7.1.3, 7.2.12, and 7.2.13 of \cite{Miyake2006}:
\begin{align}\label{eqn:eis-fourier-coeff}
E_{l}^{\phi,\psi}(z)=e_l^{\phi,\psi} + 2\sum_{n\geq 1}
\sigma_{l-1,\phi,\psi}(n)q^n\in\mathcal{M}_k(N,\phi\psi)
\end{align}
where  $\sigma_{l-1,\phi,\psi}(n) = \sum_{d|n}\phi(n/d)\psi(d)d^{l-1}$ and
\[
e_l^{\phi,\psi} = \begin{cases} 
L(\psi,1-l)&N_1=1,\\
L(\phi,0) &N_2=1\text{ and }l=1,\\
0 &\text{else.}
\end{cases}
\]
In the special case $\phi=\textbf{1}$ the Eisenstein series $E_l^{\textbf{1},\psi}(z,s)$, appropriately normalised, is given by a Poincar\'{e} series:
\begin{align}
\frac{2(-2\pi i)^lL(\overline{\psi},l+2s)G(\overline{\psi_0})}{(l-1)!N^l}E_l^{\textbf{1},\psi}(z, s)=\sum_{\gamma \in \Gamma_{\infty} \backslash \Gamma_0(N)} \frac{\overline{\psi(\gamma)}}{j(\gamma, z)^l \abs{j(\gamma, z)}^{2s}}.
\end{align}
\\

Let $k \in \N$, $\chi$ be a Dirichlet character modulo $N$ with $\chi(-1)=(-1)^{k}$, and let $f \in \mathcal{S}_{k}(N, \chi)$.  Given any $g \in \mathcal{M}_l(N, \overline{\psi} \chi)$, we consider the inner product
\[\langle gE_{k-l}^{\textbf{1},\psi}(\cdot,s), f \rangle = \int_{\Gamma_0(N) \backslash \mathcal{H}} g(z) E_{k-l}^{\textbf{1},\psi}(z,s) \overline{f(z)} y^{s + k} d\mu(z).\]
Here $d\mu(z) = (dx dy)/y^2$ is the $\SL_2(\R)$-invariant measure on the upper half plane.  Note that the integrand is indeed $\Gamma_0(N)$-invariant so the integral over this quotient is well-defined, at least when it converges.  This is the case if $s$ has sufficiently large real part, which we assume for the following proposition:

\begin{prop}\label{prop:inner-prod-as-l-values}  Let $N, k, l \in \N$, $\chi$ be a Dirichlet character modulo $N$, and $f$ be a newform in $\mathcal{S}_{k}(N, \chi)$. Let $\phi,\psi$ be Dirichlet characters modulo $N$ such that $\phi\psi=\chi$, $\phi(-1)=(-1)^l$ and denote by $\phi_0$ the primitive character that induces $\phi$.  Exclude the two cases $\phi_0 = \mathbf{1}$ and $l=2$, and $\phi=\chi$ and $l=k-2$.  Then
\begin{multline}\label{eq:inner-product-as-l-values}
 \langle E_{l}^{\textbf{1},\phi_0} E_{k-l}^{\textbf{1},\psi}(\cdot,s), f \rangle =\\
 \frac{i^{k-l}\Gamma(s + k - 1)(k-l-1)!N^{k-l}}{2^{2s+3k-l-2} \pi^{s + 2k-l- 1}L(\overline{\psi},k-l+2s)^2G(\overline{\psi_0}) } L(f^c, s+k-1) L((f^c)_{\phi_0}, s+k-l),
\end{multline}
where $f^c(z) = \overline{f(-\overline{z})}\in \mathcal{S}_k(N, \overline{\chi})$.
\end{prop}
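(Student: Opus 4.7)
\emph{Proof approach.} The plan is to carry out the standard Rankin--Selberg unfolding: first unfold $E_{k-l}^{\mathbf{1},\psi}(\cdot,s)$ using its Poincar\'e series presentation, then compute the resulting integral via Fourier inversion, and finally recognise the remaining Dirichlet series as an Euler product in the $L$-functions on the right-hand side. I would begin by solving the stated Poincar\'e series identity for $E_{k-l}^{\mathbf{1},\psi}(z,s)$, which produces a prefactor
\[\frac{(k-l-1)!\,N^{k-l}}{2(-2\pi i)^{k-l}\,L(\overline{\psi},k-l+2s)\,G(\overline{\psi_0})};\]
note that this already accounts for \emph{one} of the two factors of $L(\overline{\psi},k-l+2s)$ in the denominator. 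Substituting into the inner product and using that $g(z)\overline{f(z)}y^k$ is $\Gamma_0(N)$-invariant, absolute convergence for $\Re(s)$ sufficiently large lets me interchange sum and integral and unfold the sum over $\Gamma_{\infty}\backslash\Gamma_0(N)$ against the fundamental domain, leaving the integral $\int_{\Gamma_{\infty}\backslash\mathcal{H}} g(z)\overline{f(z)}\,y^{s+k-2}\,dx\,dy$ times the above prefactor.

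Next, I would evaluate this integral on the strip $\{0\le x<1,\,y>0\}$. Inserting the Fourier expansions of $g=E_l^{\mathbf{1},\phi_0}$ and $f$ and integrating in $x$ kills all off-diagonal terms (the constant term of the Eisenstein series is annihilated by $b_0=0$, thanks to the cuspidality of $f$), while the $y$-integral is a standard Mellin transform producing $\Gamma(s+k-1)/(4\pi)^{s+k-1}$. What remains is the Dirichlet series $2\sum_{n\ge 1}\sigma_{l-1,\mathbf{1},\phi_0}(n)\,\overline{b_n}\,n^{-(s+k-1)}$, where $f=\sum b_n q^n$. Since $f^c$ is a newform with Fourier coefficients $\overline{b_n}$, a standard convolution identity rewrites this as
\[\frac{L(f^c,s+k-1)\,L((f^c)_{\phi_0},s+k-l)}{L(\overline{\psi},k-l+2s)},\]
using $\overline{\chi}\phi_0=\overline{\psi}$ (on the primes that contribute) and $2(s+k-1)-k-l+2=k-l+2s$; this supplies the second factor of $L(\overline{\psi},k-l+2s)$ in the denominator.

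Finally, multiplying all constants, expanding $(-2\pi i)^{k-l}=(-1)^{k-l}2^{k-l}\pi^{k-l}i^{k-l}$, and using $(-1)^{k-l}i^{-(k-l)}=i^{k-l}$ recovers exactly the claimed prefactor. The two excluded cases, $(\phi_0,l)=(\mathbf{1},2)$ and $l=k-2$ with $\psi$ principal, are precisely those in which one of the two Eisenstein series $E_l^{\mathbf{1},\phi_0}$ or $E_{k-l}^{\mathbf{1},\psi}$ would specialise to the non-holomorphic weight-two Eisenstein, so the integral itself is no longer the inner product of holomorphic forms with $f$. The main obstacle will be the Euler product identity at primes $p\mid N$: away from $N$ this is a textbook two-variable Hecke computation, but at bad primes one must combine the precise local Hecke factors of the newform $f^c$ (via Proposition~\ref{prop:al-hecke-connection}) with the simpler local structure of $\sigma_{l-1,\mathbf{1},\phi_0}(p^j)$ to verify that no extraneous correction factors appear in the quoted identity.
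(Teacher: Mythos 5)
Your proposal follows essentially the same route as the paper: Rankin--Selberg unfolding of $E_{k-l}^{\mathbf{1},\psi}(\cdot,s)$ via its Poincar\'e series presentation, the Mellin transform producing $\Gamma(s+k-1)/(4\pi)^{s+k-1}$, and the standard convolution identity $\sum_n \overline{a_n}\sigma_{l-1,\mathbf{1},\phi_0}(n)n^{-(s+k-1)} = L(f^c,s+k-1)L((f^c)_{\phi_0},s+k-l)/L(\overline{\chi}\phi_0,2s+k-l)$ (which the paper simply cites, e.g.\ Raum, Proposition 4.1), with the same bookkeeping of constants and the same explanation of the excluded weight-two cases. Your closing concern about bad primes is exactly the content of that cited identity and poses no obstacle, so the argument is correct.
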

\begin{proof}  Using the Rankin-Selberg method (see \cite{Shimura1976}) we obtain
\begin{equation}\label{eqn:rankin--selberg-unfolded}\langle gE_{k-l}^{\textbf{1},\psi}(\cdot, s), f \rangle = \frac{\Gamma(s + k - 1)(k-l-1)!N^{k-l}}{2(4 \pi)^{s + k- 1}(-2\pi i)^{k-l}L(\overline{\psi},k-l+2s)G(\overline{\psi_0}) } \sum_{n \geq 1} \frac{\overline{a_n} b_n}{n^{s + k  -1}},\end{equation}
where $a_n$ and $b_n$ are the Fourier coefficients of $f$ and $g$.  Note that $\overline{a_n}$ are the Fourier coefficients of $f^c(z)$.  A standard computation (see e.g. \cite{Raum2016} Proposition 4.1\begin{footnote}{Our divisor function is $\sigma_{l-1, \phi, \mathbf{1}}$ in Raum's notation.}\end{footnote}) gives
\[\sum_{n \geq 1} \frac{\overline{a_n} \sigma_{l-1, \mathbf{1}, \phi_0}(n)}{n^{s+k-1}} = \frac{L(f^c, s+k-1) L((f^c)_{\phi_0}, s+k-l)}{L(\overline{\chi}\phi_0, 2s + k-l)} .\]
So taking $g = E_l^{\mathbf{1}, \phi}$ in \eqref{eqn:rankin--selberg-unfolded} and using \eqref{eqn:eis-fourier-coeff} we obtain the result.  \end{proof}

Note that both sides of \eqref{eq:inner-product-as-l-values} have analytic continuation to $\C$, so by the uniqueness of analytic continuation the equality also holds at $s=0$:
\begin{cor}\label{cor:inner-prod-as-l-values}  Under the hypotheses of Proposition \ref{prop:inner-prod-as-l-values},
\[\begin{aligned} &\langle E_{l}^{\textbf{1},\phi_0} E_{k-l}^{\textbf{1},\psi}, f \rangle = \frac{i^{k-l}(k-l-1)!(k-2)!N^{k-l}}{2^{3k-l-2}\pi^{2k-l-1}L(\overline{\psi},k-l)^2G(\overline{\psi_0})} L(f^c, k-1) L((f^c)_{\phi_0}, k-l). \end{aligned}\]
\end{cor}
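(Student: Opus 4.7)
The plan is to derive Corollary~\ref{cor:inner-prod-as-l-values} from Proposition~\ref{prop:inner-prod-as-l-values} by analytic continuation in $s$ and specialization at $s=0$.

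First, I would note that the identity \eqref{eq:inner-product-as-l-values} in Proposition~\ref{prop:inner-prod-as-l-values} has been established only for $\Re(s)$ sufficiently large, where the inner product and the underlying Dirichlet series converge absolutely. Both sides extend, however, to meromorphic functions of $s$ on all of $\mathbb{C}$: the left side because $E_{k-l}^{\mathbf{1},\psi}(z,s)$ admits meromorphic continuation in $s$ (cf.~\cite[\S7]{Miyake2006}), and the right side because the Gamma factor, the Dirichlet $L$-functions, and the elementary factors of $2$, $\pi$, and $N$ are all meromorphic. By uniqueness of meromorphic continuation, the identity persists throughout the $s$-plane as an identity of meromorphic functions.

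Next, I would verify that both sides are in fact \emph{holomorphic} at $s=0$. For the left side, the only possible obstruction is a failure of holomorphicity of $E_{k-l}^{\mathbf{1},\psi}(z,s)$ at $s=0$; as noted in the paragraph preceding~\eqref{eqn:eis-fourier-coeff}, this can occur only in the weight-two case with principal character. The two exclusions in the hypotheses of Proposition~\ref{prop:inner-prod-as-l-values}, namely $\phi_0 = \mathbf{1}$ with $l=2$ and $\phi = \chi$ with $l = k-2$, are precisely those that rule out this degeneration for the two factors $E_l^{\mathbf{1},\phi_0}$ and $E_{k-l}^{\mathbf{1},\psi}$, respectively. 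Hence at $s=0$ the left side reduces to the ordinary Petersson pairing of the cusp form $f$ against a product of two holomorphic modular forms, which is a finite complex number. Holomorphicity of the right side at $s=0$ follows from standard finiteness and non-vanishing properties of Dirichlet $L$-values on $\Re(s) \geq 1$, applied to $L(\overline{\psi}, k-l)$.

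Finally, I would specialize the formula of Proposition~\ref{prop:inner-prod-as-l-values} at $s=0$: using $\Gamma(k-1) = (k-2)!$ and collapsing the powers $2^{2s+3k-l-2}$ and $\pi^{s+2k-l-1}$ at $s=0$ to $2^{3k-l-2}$ and $\pi^{2k-l-1}$ yields exactly the stated identity. The only non-mechanical step is the holomorphicity check above, which is why I expect the main subtlety to lie in tracking how the exclusions in the hypothesis of Proposition~\ref{prop:inner-prod-as-l-values} guarantee that the value at $s=0$ is unambiguously defined on both sides. Beyond that, the derivation is essentially a one-line substitution.
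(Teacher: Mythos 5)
Your proposal is correct and follows essentially the same route as the paper, which derives the corollary in one line by noting that both sides of \eqref{eq:inner-product-as-l-values} have analytic continuation to $\C$ and then specializing at $s=0$ (with $\Gamma(k-1)=(k-2)!$ and the powers of $2$ and $\pi$ collapsing as you state). Your additional checks that the stated exclusions rule out the non-holomorphic weight-two Eisenstein degeneration at $s=0$ are accurate and simply make explicit what the paper leaves implicit.
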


We can now proceed to the first result on generating cusp forms by products of Eisenstein series.

\begin{dfn}\label{dfn:PkN}  Let $N=p^aq^bN'$ be as in Theorem \ref{thm:eichler-shimura-paqb}.  For each $M \mid N$, write $D(M)$ for the set of primitive Dirichlet characters modulo $M$.  Let $B(N) \subset \bigsqcup_{M \mid N} D(M) \times \{1,...,k-1\}$ consist of the pairs $(\alpha, l)$ such that
\[\begin{aligned} \alpha(-1) &= (-1)^l, \\
(\alpha, l) &\neq (\mathbf{1}, 2), (\mathbf{1}, k-2). \end{aligned}\]
Define $P_k(N) \subset \mathcal{M}_k(N)$ to be the space generated by the products
\[
( E_l^{\textbf{1},\alpha} E_{k-l}^{\textbf{1},\overline{\alpha_N}}) | W_{S}^N
\]
for all $(\alpha,l) \in B(N)$ and all sets $S$ of prime divisors of $N'$.  Here $\alpha_N$ denotes the extension of $\alpha$ to a character modulo $N$.  \end{dfn}
\begin{thm}\label{thm:prods}  Let $N = p^aq^bN'$ be as in Theorem \ref{thm:eichler-shimura-paqb}.  For $M \subset \mathcal{M}_k(N)$, write $\overline{M}$ for the projection of $M$ to $\mathcal{S}_k^{\mathrm{new}}(N)$.  Then for $k \geq 4$ even
\[\overline{P_k(N)}=\mathcal{S}^{\mathrm{new}}_k(N).\]
In the case $k=2$ we define $\mathcal{S}^{\mathrm{new}}_{2,\mathrm{rk}=0}(N) \subset \mathcal{S}^{\mathrm{new}}_2(N)$ to be the subspace generated by newforms $f$ with $L(f,1)\neq 0$. Then
\[\overline{P_2(N)}=\mathcal{S}^{\mathrm{new}}_{2,\mathrm{rk}=0}(N).\]\end{thm}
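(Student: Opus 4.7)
The plan is to argue by contradiction using the vanishing criterion Theorem \ref{thm:eichler-shimura-paqb}. Write $\mathcal{S}^* := \mathcal{S}_k^{\mathrm{new}}(N)$ for $k \geq 4$ and $\mathcal{S}^* := \mathcal{S}^{\mathrm{new}}_{2,\mathrm{rk}=0}(N)$ for $k = 2$. The inclusion $\overline{P_2(N)} \subseteq \mathcal{S}^{\mathrm{new}}_{2,\mathrm{rk}=0}(N)$ is immediate from Corollary \ref{cor:inner-prod-as-l-values}: for any newform $f \in \mathcal{S}_2^{\mathrm{new}}(N)$ with $L(f,1) = 0$, each inner product $\langle E_1^{\mathbf{1},\alpha} E_1^{\mathbf{1},\overline{\alpha_N}}, f \rangle$ has $L(f^c, 1) = \overline{L(f, 1)}$ as a factor, and the same holds after applying $W_S^N$, so $f$ is orthogonal to all of $P_2(N)$.

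For the main (reverse) inclusion $\mathcal{S}^* \subseteq \overline{P_k(N)}$, assume that $V := \mathcal{S}^* \cap \overline{P_k(N)}^\perp$ is nonzero. By Proposition \ref{prop:al}(iii) the space $P_k(N)$ is stable under every $W_\ell^N$ with $\ell \mid N'$, since such an operator permutes its generators $(E_l^{\mathbf{1},\alpha} E_{k-l}^{\mathbf{1},\overline{\alpha_N}}) | W_S^N$ up to nonzero scalars, and by Proposition \ref{prop:al}(iv) each $W_\ell^N$ is self-adjoint (the scalars in the adjoint formula are trivial for the principal character), so $V$ is $W_\ell^N$-stable as well. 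Diagonalising the commuting self-adjoint involutions $\{W_\ell^N : \ell \mid N'\}$ on $V$, pick a nonzero common eigenfunction $G \in V$ with eigenvalues $\epsilon_\ell \in \{\pm 1\}$ and expand $G = \sum_i c_i f_i$ in the basis of newforms; all participating $f_i$ inherit the eigenvalues $\epsilon_\ell$.

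The core step is to package this into an auxiliary cusp form with many vanishing twisted $L$-values. Since $G$ is a common $W_S^N$-eigenfunction, its orthogonality to every $(E_l^{\mathbf{1},\alpha} E_{k-l}^{\mathbf{1},\overline{\alpha_N}}) | W_S^N$ collapses to $\langle E_l^{\mathbf{1},\alpha} E_{k-l}^{\mathbf{1},\overline{\alpha_N}}, G \rangle = 0$ for every $(\alpha, l) \in B(N)$, which by Corollary \ref{cor:inner-prod-as-l-values} and linearity reads
\[
\sum_i \overline{c_i}\, L(f_i^c, k-1)\, L((f_i^c)_\alpha, k-l) = 0.
\]
The factors $L(f_i^c, k-1)$ are all nonzero: for $k \geq 4$ because $k-1$ lies in the region of absolute convergence of the Euler product, and for $k = 2$ because $f_i \in \mathcal{S}^*$ forces $L(f_i, 1) \neq 0$. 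Setting $\widetilde{G} := \sum_i \overline{c_i}\, L(f_i^c, k-1)\, f_i^c \in \mathcal{S}_k^{\mathrm{new}}(N)$, the displayed equation becomes $L(\widetilde{G}_\alpha, k-l) = 0$ for each $(\alpha, l) \in B(N)$. Since $W_\ell^N$ has integer entries it commutes with complex conjugation of Fourier coefficients, so $\widetilde{G}$ remains a common $W_\ell^N$-eigenfunction.

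Finally, writing $j = k - l - 1$ the vanishing reads $L(\widetilde{G}_\alpha, j+1) = 0$ for every primitive $\alpha$ modulo $M \mid N$ and every $j \in \{0, \ldots, k-2\}$ with $\alpha(-1) = (-1)^{j+1}$, with the only missing pairs $(\alpha, j) = (\mathbf{1}, 1), (\mathbf{1}, k-3)$ coming from the exclusions $(\mathbf{1}, k-2), (\mathbf{1}, 2)$ in $B(N)$. For $k = 2$ these missing pairs fall outside the allowed range, so Theorem \ref{thm:eichler-shimura-paqb} with $\epsilon = 1$ directly yields $\widetilde{G} = 0$. For $k \geq 4$, Proposition \ref{prop:eichler-shimura-trivial-char} recovers exactly the missing vanishings $L(\widetilde{G}, 2) = L(\widetilde{G}, k-2) = 0$ from the others, after which Theorem \ref{thm:eichler-shimura-paqb} again gives $\widetilde{G} = 0$. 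Linear independence of distinct newforms $f_i^c$ and the nonvanishing of $L(f_i^c, k-1)$ then force every $c_i = 0$, contradicting $G \neq 0$. The main obstacle I anticipate is ensuring that $\widetilde{G}$ inherits the Atkin-Lehner eigen-structure needed by Theorem \ref{thm:eichler-shimura-paqb}, and that the exclusions $(\mathbf{1}, k-2), (\mathbf{1}, 2)$ in $B(N)$ align precisely with the missing values recovered by Proposition \ref{prop:eichler-shimura-trivial-char}.
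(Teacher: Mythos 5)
Your proposal is correct and follows essentially the same route as the paper's proof: pass to a common Atkin--Lehner eigenfunction in the orthogonal complement, use Corollary \ref{cor:inner-prod-as-l-values} to turn orthogonality into vanishing of the sums $\sum_i \beta_i L(f_i,k-1)L((f_i)_\alpha,k-l)$, form the auxiliary form $\widetilde{G}=\sum_i \beta_i L(f_i,k-1)f_i$ (the Kohnen--Martin trick), recover the missing values $L(\widetilde{G},2)=L(\widetilde{G},k-2)=0$ via Proposition \ref{prop:eichler-shimura-trivial-char}, and conclude with Theorem \ref{thm:eichler-shimura-paqb} and the nonvanishing of $L(f_i,k-1)$. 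The only cosmetic difference is that the paper simply notes $f_i^c=f_i$ (real Fourier coefficients for trivial nebentypus) rather than arguing that conjugation preserves the Atkin--Lehner eigenstructure.
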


\begin{proof} First assume $k>2$. As in Section \ref{sctn:eichler--shimura} denote the set of prime divisors of $N'$ by $T$. Assume that $\overline{P_k(N)}$ is a proper subset of $\mathcal{S}_k^{\mathrm{new}}(N)$. Since $P_k(N)$ is closed under the action of the Atkin-Lehner operators $W_S^N$ for $S\subseteq T$, so is the orthogonal complement of $\overline{P_k(N)}$ in $\mathcal{S}_k^{\mathrm{new}}(N)$. Therefore there exists a non-zero form $g\in \mathcal{S}^{\mathrm{new}}_k(N)$ that is orthogonal to $P_k(N)$ and an eigenform of the $W_S^N$.  We can write
\begin{equation}\label{eqn:g-as-sum-of-newforms}
g=\sum_{i=1}^r \beta_i f_i,
\end{equation}
where $f_1,\ldots,f_r$ are the newforms in $\mathcal{S}^{\mathrm{new}}_k(N)$ with the same $W_S^N$-eigenvalues as $g$ for all $S\subseteq T$. Using Corollary \ref{cor:inner-prod-as-l-values} (note $f_i = f_i^c$ since the $f_i$ have real Fourier coefficients) we get
\begin{align*}
\langle E_l^{\mathbf{1}, \alpha} E_{k-l}^{\overline{\mathbf{1}, \alpha_N}}, f_i\rangle =
 \frac{(k-l-1)!(k-2)!N^{k-l}}{(-2\pi i)^{k-l}(4\pi)^{k-1}L(\alpha_N,k-l)^2 G(\alpha)} L(f_i, k-1) L((f_i)_{\alpha}, k-l).\end{align*}
Since $g$ is a $W_S^N$-eigenform for $S\subseteq T$ and the operators $W_S^N$ are self-adjoint, for all $S \subset T$ we have $\langle E_l^{\mathbf{1}, \alpha} E_{k-l}^{\overline{\mathbf{1}, \alpha_N}}|W_S^N, g\rangle=0$ if and only if $\langle E_l^{\mathbf{1}, \alpha} E_{k-l}^{\overline{\mathbf{1}, \alpha_N}}, g\rangle = 0$.  We see that orthogonality of $g$ to $P_k(N)$ is equivalent to
\begin{align}\label{eqn:orthogonality-of-g1}
\sum_{i=1}^r \beta_i L(f_i, k-1)L((f_i)_{\alpha}, k-l)=0
\end{align}
for all $(\alpha,l) \in B(N)$.  Following an idea from the proof of Theorem 1 \cite[Theorem 1]{KohnenMartin2008}, we define another form in $G \in \mathcal{S}_k^{\mathrm{new}}(N)$ by
\[G = \sum_{i=1}^r \beta_i L(f_i,k-1)f_i.\]
Since the $f_i$ all have the same $W_S^N$-eigenvalues as $g$ for $S\subseteq T$, so does $G$. Then \eqref{eqn:orthogonality-of-g1} translates to
\begin{equation}\label{eqn:orthogonality-of-G} L(G_\alpha, k-l)=0\end{equation}
for $(\alpha, l) \in B$. Now applying Proposition \ref{prop:eichler-shimura-trivial-char} we see that $L(G, 2) = 0$ and $L(G, k-2) = 0$.  Thus $G$ satisfies the conditions of Theorem \ref{thm:eichler-shimura-paqb}, so $G = 0$.  Since $k \geq 4$, $L(f_i, k-1) \neq 0$, so all $\beta_i$ must be zero, and we arrive at the contradiction $g=0$.\\  

In the case where $k=2$ the proof is similar.  The inclusion $\overline{P_2(N)} \subset \mathcal{S}^{\mathrm{new}}_{2,\mathrm{rk}=0}(N)$ follows from Corollary \ref{cor:inner-prod-as-l-values}, which shows that $P_2(N)$ is orthogonal to every newform $f$ with $L(f,1)=0$. The rest of the argument works as above. \end{proof}

When $a = b = 0$, so $N = N'$ is squarefree, one easily sees that the set $B(N)$ in Definition \ref{dfn:PkN} satisfies
\[\#B(N) \sim_k \frac{k-1}{2} \prod_{p \mid N} (p-1)\]
for $k\to\infty$. Therefore, writing $p_k(N)$ for the number of generators for $P_k(N)$ as given in Definition \ref{dfn:PkN}, we have
\[p_k(N) \sim_k \frac{k-1}{2} \prod_{p \mid N} 2(p-1).\]
This should be compared to
\[\dim \mathcal{S}_k^{\mathrm{new}}(N) \sim_k \frac{k-1}{12} \prod_{p \mid N} (p-1)\]
(c.f. \cite{Martin2005}).  It is therefore an interesting question whether we can remove the Atkin--Lehner operators appearing in the definition of $P_k(N)$ so to obtain a space $P_k(N)$ for which the number of generators is of similar size to the dimension of the target space.  When $N=p^a q^b$ for $a,b>1$ we have
\[p_k(N) \sim_k \frac{k-1}{2} p^{a-1}(p-1) q^{b-1}(q-1),\]
while (with $a, b \geq 3$ for simplicity),
\[\dim \mathcal{S}_k^{\mathrm{new}}(N) \sim_k \frac{k-1}{12} p^{a-1}(p-1)\left(1 - \frac{1}{p^2}\right) q^{b-1}(q-1) \left(1-\frac{1}{q^2}\right).\]
In this case (as well as the case of $N=N'$) it is also an interesting question whether one can quantify the linear dependency among the products of Eisenstein series generating $P_k(N)$ (or $Q_k(N)$, defined below).  These questions may be asked for any $k$, but they are particularly pertinent when $k=2$, as any additional symmetry in this case which would allow lowering the constant further could have application to Brumer's conjecture (see \cite{Iwaniec2000}) regarding the number of newforms $f$ with $L(f,1) = 0$.\\

Most of the methods we have developed also work for the spaces $\mathcal{M}_k(N,\chi)$ where $\chi$ is a non-principal character modulo $N$.  However, there are some complications, in particular because the Atkin--Lehner operators $W_S^N$ are no longer endomorphisms of $\mathcal{M}_k(N,\chi)$ when $\chi$ is not quadratic. This means that it is necessary to take Eisenstein series coming from different eigenspaces of the diamond operators to generate $\mathcal{S}_k(N, \chi)$.  To minimise the technicalities we give an example of how the same methods can be used to treat the case of prime level, where the Atkin--Lehner operators are not needed:

\begin{thm}\label{thm:prods-gen-weight-2-prime}
Let $p$ be prime and let $\chi$ a character modulo $p$.  Let $P_2(p,\chi)$ be the space generated by 
\[E_{1}^{\textbf{1},\overline{\alpha}} E_{1}^{\textbf{1},\overline{\chi}\alpha},\]
for $\alpha$ varying over all (primitive) odd characters modulo $p$.  Write $\overline{P_2(p,\chi)}$ for the projection of $P_2(p, \chi)$ to $\mathcal{S}_2(p,\chi)$.  Let $\mathcal{S}^{\mathrm{new}}_{2,\mathrm{rk}=0}(p, \chi) \subset \mathcal{S}_2(p, \chi)$ be the subspace generated by newforms $f$ with $L(f, 1) \neq 0$.  Then
\[
\overline{P_2(p,\chi)}=\mathcal{S}^{\mathrm{new}}_{2,\mathrm{rk}=0}(p, \chi)
\]
\end{thm}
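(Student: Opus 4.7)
The plan is to follow the strategy of Theorem~\ref{thm:prods}, simplified by two features of prime level $N=p$. Since $\mathcal{S}_2(p,\chi) = \mathcal{S}_2^{\mathrm{new}}(p,\chi)$ there is no old/new splitting to perform, and with $T=\{p\}$ and $\overline{T}=\emptyset$ the Atkin--Lehner orbits appearing in Definition~\ref{dfn:PkN} collapse, so $P_2(p,\chi)$ is already defined directly in terms of Eisenstein products without auxiliary $W_S^N$. Moreover, every non-trivial character modulo $p$ is automatically primitive, which will remove the need to invoke Lemma~\ref{lem:twist-by-non-prim-char}.

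The easy inclusion $\overline{P_2(p,\chi)} \subseteq \mathcal{S}^{\mathrm{new}}_{2,\mathrm{rk}=0}(p,\chi)$ follows by specialising Corollary~\ref{cor:inner-prod-as-l-values} to $k=2$ and $l=1$: for any newform $f$ in the target space and any odd primitive character $\alpha$ modulo $p$, the inner product $\langle E_1^{\mathbf{1},\overline{\alpha}} E_1^{\mathbf{1},\overline{\chi}\alpha}, f\rangle$ is a non-zero multiple of $L(f^c,1)\,L((f^c)_\alpha,1)$. Hence any newform $f$ with $L(f,1)=0$ is orthogonal to every generator of $P_2(p,\chi)$, so the cuspidal projection of $P_2(p,\chi)$ avoids the span of such newforms entirely.

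For the reverse inclusion, I would take a cusp form $g$ orthogonal to $P_2(p,\chi)$, decompose it in the newform basis as $g = \sum_i \beta_i f_i$, and use Corollary~\ref{cor:inner-prod-as-l-values} once more to translate the orthogonality into
\begin{equation*}
\sum_i \beta_i L(f_i^c,1)\, L((f_i^c)_\alpha,1) = 0 \qquad \text{for every odd primitive }\alpha\bmod p.
\end{equation*}
As in the proof of Theorem~\ref{thm:prods}, I then form the weighted sum $G := \sum_i \beta_i L(f_i^c,1)\, f_i^c$, a cusp form satisfying $L(G_\alpha,1)=0$ for every odd primitive $\alpha$ mod $p$. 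If one can conclude $G=0$, then linear independence of the $f_i^c$ forces $\beta_i L(f_i,1)=0$, and hence $\beta_i=0$ whenever $L(f_i,1)\ne 0$; this says exactly that $g$ lies in the orthogonal complement of $\mathcal{S}^{\mathrm{new}}_{2,\mathrm{rk}=0}(p,\chi)$ inside $\mathcal{S}_2(p,\chi)$.

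The main step is therefore the vanishing of $G$. Theorem~\ref{thm:eichler-shimura-triv-char} is stated only for trivial character, but its proof specialises cleanly to this setting: the identity $f|W_p|W_p = \chi(-1)f = f$ still holds because $\chi$ is even, and the remaining ingredients are the commutation relations of Propositions~\ref{prop:al} and~\ref{prop:twists-commute-with-al-and-hecke}, which hold in general. My plan is to show that $\xi^+_{G|W_p}(0;u,v)=0$ for every $(u,v)\in E_p$, which implies $G|W_p=0$ and hence $G=0$ by the injectivity of $f\mapsto\xi^+_f$. Pairs with $p\mid u$ contribute zero tautologically, since then $-u\equiv u\bmod p$ in the definition of $\xi^+$; pairs with $p\mid v$ are handled using relation~\eqref{eq:manin-symb-rel3} with $k=2$, which gives $\xi_{G|W_p}(0;u,0) = \xi_{G|W_p}(0;-u,0)$. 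In the remaining case, with both $u$ and $v$ coprime to $p$, one has $N'=p$ and $S=\emptyset$ in the notation of Lemma~\ref{lem:merel-coset-reps}, and formula~\eqref{eq:period-calculation-pm} expresses $\xi^+_{G|W_p}(0;u,v)$ as a linear combination of the values $L(S_\alpha(G),1)=G(\overline{\alpha})\,L(G_\alpha,1)$ over odd characters $\alpha$ modulo $p$---all of which vanish by the hypothesis on $G$, completing the argument.
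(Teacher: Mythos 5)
Your proposal is correct and follows essentially the same route as the paper: the paper's own proof simply says to repeat the argument of Theorem \ref{thm:prods}, replacing Theorem \ref{thm:eichler-shimura-paqb} by exactly the vanishing criterion you establish (that $L(G_\alpha,1)=0$ for all odd $\alpha$ modulo $p$ forces $G=0$ for $G\in\mathcal{S}_2(p,\overline{\chi})$), proved by specialising the method of Theorem \ref{thm:eichler-shimura-triv-char}. Your case analysis on $(u,v)\in E_N$ and the observation that every odd character modulo $p$ is primitive (so Lemma \ref{lem:twist-by-non-prim-char} and any Atkin--Lehner eigenform hypotheses become unnecessary) supplies precisely the details the paper leaves implicit.
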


\begin{proof}
The proof proceeds along the same lines as the proof of \ref{thm:prods}. Instead of Theorem \ref{thm:eichler-shimura-paqb} we use the following vanishing criterion, which can be proved with the same methods as Theorem \ref{thm:eichler-shimura-triv-char}. If, for $G\in\mathcal{S}_2(p,\overline{\chi})$, the $L$-value $L(G_\alpha,1)$ for all odd characters $\alpha$ modulo $p$ vanishes, then $G$ vanishes.
\end{proof}

\section{The new part of $P_k(N)$}\label{sctn:new-part}

In this section we will analyse the new parts of the generators of $P_k(N)$ for any $N$. We use this to construct another space $Q_k(N)$ with the same projection to the new space as $P_k(N)$ whose generators do not involve Atkin--Lehner operators. While $P_k(N)$ was more useful for the proof of Theorem \ref{thm:prods}, $Q_k(N)$ is more explicit and easy to implement on a computer.  The first step is to write $E_{k-l}^{\textbf{1},\overline{\alpha_N}}$ in terms of the Eisenstein series attached to the underlying primitive character:

\begin{lem}\label{lem:eisenstein-decomposition} Let $\alpha$ be a primitive character modulo $M$ with $\alpha(-1)=(-1)^k$.  Writing $N = \prod p^{v_p(N)}$, let $N_M = \prod_{p \mid M} p^{v_p(N)}$ be the $M$-part of $N$, so that $M\mid N_M$ and $\gcd(M,N/N_M)=1$. Then
\[
E_{k-l}^{\textbf{1},\overline{\alpha_N}} = \left(\frac{N}{M}\right)^{\frac{k}{2}-l}\sum_{e\mid N/N_M}\mu(e)\alpha(e)e^{-\frac{k}{2}+l}E_{k-l}^{\textbf{1},\overline{\alpha}}|B_{N/Me}
\]
\end{lem}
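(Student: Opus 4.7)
\emph{Proof strategy.} The plan is to compare the two sides by applying Möbius inversion directly to the series defining $E_{k-l}^{\mathbf{1},\overline{\alpha_N}}$. The key algebraic observation is that, since $N_M$ and $M$ share the same radical while $\gcd(N_M,N/N_M)=1$, the condition $\gcd(d,N)=1$ is equivalent to $\gcd(d,M)=1$ together with $\gcd(d,N/N_M)=1$. Because $\alpha(d)$ already vanishes unless the first holds, one can write
\[
\alpha_N(d) = \alpha(d)\,[\gcd(d,N/N_M)=1] = \sum_{\substack{e \mid N/N_M \\ e \mid d}} \mu(e)\alpha(e)\alpha(d/e),
\]
where the factorisation $\alpha(d)=\alpha(e)\alpha(d/e)$ uses that $\gcd(e,M)=1$ (since $e \mid N/N_M$ and $N/N_M$ is coprime to $M$).

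Next I plug this into the series representation of $E_{k-l}^{\mathbf{1},\overline{\alpha_N}}(z,s)$ from the paper's Eisenstein definition. In the range $\Re(s)\gg 0$ the double sums may be exchanged, and substituting $d=ed'$ produces inner sums
\[
\sum_{(c,d')\neq(0,0)}\frac{\alpha(d')}{(Ncz+ed')^{k-l}\,\abs{Ncz+ed'}^{2s}}.
\]
Since $e\mid N/N_M$ divides $N$, one may factor $Ncz+ed' = e\bigl((N/e)cz+d'\bigr)$, pulling a factor $e^{-(k-l)-2s}$ outside. What remains is, up to the normalising constant for the level-$M$ Eisenstein series, exactly the defining series for $E_{k-l}^{\mathbf{1},\overline{\alpha}}$ evaluated at $(N/(Me))z$. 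Converting the argument shift into the lifting operator via the identity $f(Lz) = L^{-(k-l)/2}(f\mid B_L)(z)$, applied to the weight-$(k-l)$ form $f = E_{k-l}^{\mathbf{1},\overline{\alpha}}$, then introduces a factor $(N/(Me))^{-(k-l)/2}$.

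Finally I collect the prefactors—the overall ratio $N^{k-l}/M^{k-l}$ coming from the two Eisenstein normalisations (the Gauss sum $G(\alpha)$ appears in both and cancels), the $e^{-(k-l)-2s}$ from the factoring step, and the $(N/(Me))^{-(k-l)/2}$ from the $B$-operator conversion—and specialise to $s=0$ by analytic continuation, which is legitimate because both sides extend holomorphically to $s=0$ under the lemma's parity assumption on $\alpha$ (the problematic weight-two trivial-character case being excluded). The main obstacle is purely bookkeeping of the various powers of $e$ and $N/M$; no new ideas enter beyond the Möbius inversion and the elementary factoring of $e$ from the linear form $Ncz+ed'$.
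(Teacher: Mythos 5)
Your proposal is essentially the paper's own proof: the same M\"obius/indicator identity $\sum_{e\mid\gcd(d,N/N_M)}\mu(e)$ applied inside the defining series, the same substitution $d=ed'$ and factoring $Ncz+ed'=e\bigl(M\tfrac{N}{Me}cz+d'\bigr)$ to recognise the level-$M$ series at $(N/(Me))z$, cancellation of the Gauss sums, and evaluation at $s=0$ by uniqueness of analytic continuation. One bookkeeping caveat: the exponents in the lemma as stated ($(N/M)^{k/2-l}$ and $e^{l-k/2}$) correspond to normalising $B_{N/Me}$ in the ambient weight $k$, i.e. $E|B_L=L^{k/2}E(L\cdot)$; with your conversion $E\bigl(Lz\bigr)=L^{-(k-l)/2}(E|B_L)(z)$ you would instead arrive at $(N/M)^{(k-l)/2}$ and $e^{-(k-l)/2}$, so to reproduce the displayed constants you must use the weight-$k$ slash normalisation of $B$ (the identity is of course the same up to this convention).
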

\begin{proof}
The proof is analogous to the proof of \cite[Lemma 8.4.2]{CohenStromberg2017}. For Re$(s)\gg 0$ we have
\begin{align*}
E_{k-l,N}^{\mathbf{1}, \overline{\alpha_N}}(z,s) &= \frac{(k-l-1)!N^{k-l}}{(-2\pi i)^{k-l} G(\overline{\alpha})}\sum_{(c,d)\neq(0,0)} \frac{\alpha_N(d)}{(cNz+d)^{k-l}|cNz+d|^{2s}}.
\end{align*}
Using the fact that $\sum_{d \mid n} \mu(d)$ is the indicator function for $n=1$, we get
\begin{align*}
\sum_{(c,d)\neq(0,0)} \frac{\alpha_N(d)}{(cNz+d)^{k-l}|cNz+d|^{2s}}&=\sum_{(c,d)\neq(0,0)}\sum_{e\mid\gcd(d,N/N_M)}\mu(e)\frac{\alpha(d)}{(cNz+d)^{k-l}|cNz+d|^{2s}}\\
&=\sum_{e\mid N/N_M}\frac{\mu(e)\alpha(e)}{e^{k-l+2s}}\sum_{(c,d)\neq(0,0)}\frac{\alpha(d)}{(cM(\frac{N}{Me})z+d)^{k-l}|cM(\frac{N}{Me})z+d|^{2s}}\\
&= \frac{(-2\pi i)^{k-l} G(\overline{\alpha})}{(k-l-1)!M^{k-l}}\sum_{e\mid N/N_M}\mu(e)\alpha(e)e^{-k+l-2s}E_{k-l}^{\textbf{1},\overline{\alpha}}((N/Me)z,s).
\end{align*}
We obtain an equality of functions of the variable $s$, which remains true for $s=0$ by uniqueness of analytic continuation.\end{proof}

Thus the product $E_l^{\textbf{1},\alpha}E_{k-l}^{\textbf{1},\overline{\alpha_N}}$ is a linear combination of products of the form 
\[E_l^{\textbf{1},\alpha}\cdot \left(E_{k-l}^{\textbf{1},\overline{\alpha}}|B_{N/Me}\right)\]
for $e\mid N/N_M$. If $e\neq 1$ these products clearly have level smaller than $N$, so are old forms.  Hence the projection of $P_k(N)$ to the new space $\overline{P_k(N)}$ is generated by the projections of the products
\begin{align}\label{eqn:prod-generators}
\left(E_l^{\textbf{1},\alpha}|W_S^N\right) \cdot\left(E_{k-l}^{\textbf{1},\overline{\alpha}}|B_{N/M}|W_S^N\right).
\end{align}
where $S\subseteq T$ is a set of prime divisors of the squarefree part of $N$.  Let us focus on the first factor for now.  It is easy to see that, as operators on $\mathcal{M}_k(M, \alpha)$, we have the equality $W_S^N = W_{S_M}^M | B_{(N/M)_S}$, where $S_M$ is the set of primes in $S$ that divide $M$.

Using Proposition 14 of \cite{Weisinger1977} we see that the first factor in \eqref{eqn:prod-generators} is a multiple of
\[E_l^{\overline{\alpha}_{S_M},\alpha_{\overline{S}_M}} | B_{(N/M)_S},\]
where $\overline{S}_M = \{p \mid M\} \setminus S_M$.  To study the second factor in \eqref{eqn:prod-generators} we use an extension of Proposition 1.5 of \cite{AtkinLi78} that allows us to swap the order of the lifting operator and the Atkin--Lehner operator in equation \eqref{eqn:prod-generators}.
\begin{lem}\label{lem:al-lift-swap}
Let $F\in\mathcal{M}_k(M,\chi)$, $d\in\mathbb{Z}_{\geq 1}$, and $S$ be a set of primes dividing $dM$.  Let $\overline{S} = \{p \mid dM\} \setminus S$, $S_M = S \cap \{p \mid M\}$, and define $d_S = \prod_{p \in S} p^{v_p(d)}$ and $d_{\overline{S}}$ as usual.  Then
\begin{align*}
F | B_d | W_S^{Md}= \overline{\chi}_{S}(d_{\overline{S}}) \overline{\chi}_{\overline{S}}(d_S) F | W_{S_M}^M | B_{d_{\overline{S}}}
\end{align*}
\end{lem}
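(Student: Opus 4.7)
The plan is to reduce the identity to a direct matrix computation together with a single application of the transformation law of $F$ under $\Gamma_0(M)$.  First I would set up the notation $M' = M_{S_M}$, $M'' = M/M_{S_M}$, $d' = d_S$, $d'' = d_{\overline{S}}$, and fix explicit integral representatives $W_{S_M}^M = \sabcd{M'x'}{y'}{Mz'}{M'w'}$ and $W_S^{Md} = \sabcd{M'd'x}{y}{Mdz}{M'd'w}$ satisfying the congruences required by the definition in Section~\ref{sctn:preliminaries} (so $y\equiv 1\pmod{M'd'}$, $x\equiv 1\pmod{M''d''}$, $y'\equiv 1\pmod{M'}$, $x'\equiv 1\pmod{M''}$).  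Since $B_d$ is slash by $\mathrm{diag}(d,1)$, the products on both sides of the identity can be written out as integer matrices.

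Next I would match determinants.  A direct computation gives $\det(B_d W_S^{Md})=M'(d')^2 d''$ and $\det(W_{S_M}^M B_{d''})=M' d''$, so scaling the latter by the integer $d'$ equalises the two; this scaling is invisible to the weight-$k$ slash operator, hence $F|(d' W_{S_M}^M B_{d''})=F|W_{S_M}^M|B_{d''}$.  The claim will therefore follow once I exhibit a matrix $\gamma \in \Gamma_0(M)$ with
\[
B_d W_S^{Md} \;=\; \gamma\cdot d'\,W_{S_M}^M B_{d''}
\]
and compute $\chi(\gamma_{22})$.

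The central step is to invert $d' W_{S_M}^M B_{d''}$ via its adjugate and multiply out.  After cancellations using $d=d'd''$ and $Md=M'M''d'd''$ one obtains
\[
\gamma \;=\; \begin{pmatrix} M'd' xw' - d''M''yz' & d''x'y - d'xy' \\ M(zw'-wz') & M'wx' - M''zy' \end{pmatrix},
\]
an integer matrix of determinant one whose lower-left entry is a multiple of $M$, so $\gamma\in\Gamma_0(M)$.  Applying the transformation law for $F$ then yields $F|B_d|W_S^{Md} = \chi(\gamma_{22})\cdot F|W_{S_M}^M|B_{d''}$.

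The final step is to identify $\chi(\gamma_{22})$ with the scalar in the statement.  I factor $\chi=\chi_{S_M}\chi_{\overline{S_M}}$ and reduce modulo $M'$ and $M''$ separately.  Reducing the determinant relation $M'd'xw-M''d''yz=1$ modulo $M'$ and using $y\equiv 1\pmod{M'}$ gives $M''z\equiv -\overline{d''}\pmod{M'}$; combined with $y'\equiv 1\pmod{M'}$ this yields $\gamma_{22}\equiv \overline{d''}\pmod{M'}$, so $\chi_{S_M}(\gamma_{22})=\overline{\chi}_S(d'')$.  A symmetric calculation modulo $M''$, using $x,x'\equiv 1\pmod{M''}$, gives $\gamma_{22}\equiv \overline{d'}\pmod{M''}$ and hence $\chi_{\overline{S_M}}(\gamma_{22})=\overline{\chi}_{\overline{S}}(d')$.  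Multiplying the two local contributions produces exactly the scalar in the lemma.  The only real obstacle is bookkeeping: one must keep straight the simultaneous factorisations of $M$ and $d$ into $S$- and $\overline{S}$-parts, and remember that because $\chi$ has modulus $M$ one has $\chi_S=\chi_{S_M}$ (primes in $S\setminus S_M$ contribute trivially).
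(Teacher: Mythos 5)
Your argument is correct: the explicit $\gamma$ you produce does have integer entries, determinant one, and lower-left entry divisible by $M$, and your CRT evaluation of $\chi(\gamma_{22})$ (using both determinant relations together with the congruence normalisations $y\equiv 1 \pmod{M'd'}$, $x\equiv 1\pmod{M''d''}$, $y'\equiv 1\pmod{M'}$, $x'\equiv 1\pmod{M''}$) gives exactly $\overline{\chi}_S(d_{\overline{S}})\,\overline{\chi}_{\overline{S}}(d_S)$. The route differs from the paper's in its mechanics: the paper never constructs the transition matrix in $\Gamma_0(M)$. Instead it factors $B_d W_S^{Md}=\sabcd{M_Sd_Sx}{d_{\overline{S}}y}{Mz}{M_Sw}\sabcd{d}{0}{0}{d_S}$, observes that the left factor has determinant $M_S$ and the shape required in Proposition \ref{prop:al}(i) for level $M$ and the prime set $S_M$, and reads off the character factor $\overline{\chi}_S(d_{\overline{S}}y)\overline{\chi}_{\overline{S}}(d_Sx)=\overline{\chi}_S(d_{\overline{S}})\overline{\chi}_{\overline{S}}(d_S)$ directly from that proposition, while the right factor is $B_{d_{\overline{S}}}$ up to a scalar matrix. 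In effect, your adjugate computation and the mod-$M'$/mod-$M''$ evaluation of $\chi(\gamma_{22})$ re-prove Proposition \ref{prop:al}(i) in this particular instance; the paper's version buys brevity by quoting that proposition once, whereas yours is more self-contained and makes the underlying $\Gamma_0(M)$-equivalence completely explicit. Both rest on the same congruence bookkeeping, and your identification $\chi_S=\chi_{S_M}$ (primes of $S$ not dividing $M$ contributing trivially) is the right way to reconcile the notation in the statement.
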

\begin{proof} Choose $x,y,z,w\in \Z$ as in the definition of $W_S^{Md}$, i.e. satisfying $y\equiv 1\pmod{d_SM_S}$, $x\equiv 1\pmod{d_{\overline{S}}M_{\overline{S}}}$ and $(M_Sd_S)^2xw-Mdzy=M_Sd_S$. As operators on $\mathcal{M}_k(N,\chi)$, we have
\[ B_dW_S^{Md}=\begin{pmatrix} d & 0 \\ 0 & 1 \end{pmatrix} \begin{pmatrix} d_SM_Sx & y \\ Md z & d_SM_Sw\end{pmatrix} 
= \begin{pmatrix} M_Sd_Sx & d_{\overline{S}}y \\ Mz & M_Sw \end{pmatrix}\begin{pmatrix} d & 0 \\ 0 & d_S \end{pmatrix}.\]
The determinant of $\left(\begin{smallmatrix} M_Sd_Sx & d_{\overline{S}}y \\ Mz & M_Sw \end{smallmatrix}\right)$ is $M_S$. The result now follows by Proposition \ref{prop:al} and the fact that $y\equiv 1 \pmod{M_S}$ and $x\equiv 1\pmod{M_{\overline{S}}}$. \end{proof}

Applying Lemma \ref{lem:al-lift-swap} with $d = N/M$ to $E_{k-l}^{\textbf{1},\overline{\alpha}}|B_{N/M} | W_S^N$ and using Proposition 14 of \cite{Weisinger1977}, we see that the second factor in \eqref{eqn:prod-generators} is a multiple of
\[
E_{k-l}^{\alpha_{S_M},\overline{\alpha}_{\overline{S}_M}}|B_{\left(\frac{N}{M}\right)_{\overline{S}}},
\]
so the product in \eqref{eqn:prod-generators} is a multiple of
\[
\left(E_l^{\overline{\alpha}_{S_M},\alpha_{\overline{S}_M}} | B_{\left(\frac{N}{M}\right)_{S}}\right) \cdot \left(E_{k-l}^{\alpha_{S_M},\overline{\alpha}_{\overline{S}_M}}|B_{\left(\frac{N}{M}\right)_{\overline{S}}}\right).
\]

In order to unwind this, set 
\[\begin{aligned} &M_1 = M_S= \prod_{p \in S_M} p^{v_p(M)},\: &&M_2 = M_{\overline{S}}= \prod_{p \in \overline{S}_M} p^{v_p(M)}, \\
&d_1 = (N/M)_S  = \prod_{p \in S} p^{v_p(N) - v_p(M)},\: &&d_2 = (N/M)_{\overline{S}}  = \prod_{p \in \overline{S}} p^{v_p(N) - v_p(M)}.\end{aligned}\]  
Note that $\overline{S}_M = \{p \mid M\} \setminus S_M \subset \overline{S}$.  With these definitions $\overline{\alpha}_{S_M}$ and $\alpha_{\overline{S}_{M}}$ are primitive characters modulo $M_1$ and $M_2$ respectively, which we now relabel as $\phi$ and $\psi$.  Note that as $\alpha$ varies over all primitive characters of parity $\epsilon$ modulo $M$, $\phi$ and $\psi$ vary over all primitive characters modulo $M_1$ and $M_2$ such that $\phi\psi$ has parity $\epsilon$.  Now fix $M|N$ and let $S\subseteq T$ vary: we obtain all $M_1, M_2$ such that $M_1\mid N_T$ and $M_1M_2 = M$, and for given $M_1, M_2$ we obtain all $d_1, d_2$ such that $d_1M_1\mid N_T$ and $d_1 M_1 d_2 M_2 = N$.

\begin{dfn}\label{dfn:QkN}  Let $N \in \N$.  Let $T$ be the set of primes $p$ such that $v_p(N)=1$.  Let $B'(N)$ consist of all quintuples $(\phi, \psi, l, d_1, d_2)$, where $\phi$ varies over all primitive characters of conductor $M_1$, with $M_1$ varying over all divisors of $N_T$, $\psi$ varies over all primitive characters of conductor $M_2$, with $M_2$ varying over all divisors of $N$, all $l \in \{1,\ldots,k-1\}$, and all $d_1,d_2\in\N$, such that
\begin{footnote}{Of course the second condition is only relevant when $M = M_1 = M_2 = 1$.}\end{footnote}
\[\begin{aligned} \phi\psi(-1) &= (-1)^l, \\
(\phi, \psi, l) &\neq (\mathbf{1}, \mathbf{1}, 2), (\mathbf{1}, \mathbf{1}, k-2), \\
d_1M_1&\mid N_T,\\
d_1 M_1 d_2 M_2 &= N.\end{aligned}\]
We define $Q_k(N)$ to be the vector space generated by
\begin{equation}\label{eq:final-form-of-products}
E_{l}^{\phi,\psi}|B_{d_1}\cdot E_{k-l}^{\overline{\phi},\overline{\psi}}|B_{d_2}.
\end{equation}
for all $(\phi, \psi, l, d_1, d_2) \in B'(N)$.
\end{dfn}  

The above calculation shows that $Q_k(N)$ and $P_k(N)$ have the same projection onto the new subspace $\mathcal{S}_k^{\mathrm{new}}(N)$.  Using the spaces $Q_k(N)$ and their lifts we can extend Theorem \ref{thm:prods} to the full space $\mathcal{S}_k(N)$: 

\begin{thm}\label{thm:prods-full-space}
Let $N$ be as in Theorem \ref{thm:eichler-shimura-paqb} and $\mathcal{Q}_k(N)=\bigcup\limits_{N_0 d | N}Q_k(N_0)|B_d$ be the subspace of $\mathcal{M}_k(N)$ generated by the products
\[
E_{l}^{\phi,\psi}|B_{d_1d}\cdot E_{k-l}^{\overline{\phi},\overline{\psi}}|B_{d_2d}
\]
for $(\phi, \psi, l, d_1,d_2) \in B'(N_0)$ (as in Definition \ref{dfn:QkN}).  Then for $k\geq 4$
\[
\mathcal{M}_k(N) =  \mathcal{Q}_k(N)+\mathcal{E}_k(N).
\]
\end{thm}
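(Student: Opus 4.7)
The plan is to combine Theorem \ref{thm:prods} (which gives the new-subspace statement) with the newform decomposition of $\mathcal{S}_k(N)$, using induction on the divisors of $N$ to lift everything up from the smaller levels.

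First, by the analysis of Section \ref{sctn:new-part} (specifically Lemma \ref{lem:eisenstein-decomposition} and the computation following it), the projections of $P_k(N_0)$ and $Q_k(N_0)$ onto $\mathcal{S}_k^{\mathrm{new}}(N_0)$ coincide. Combined with Theorem \ref{thm:prods}, this means that for every divisor $N_0$ of $N$ (which necessarily again has the form $p^{a'} q^{b'} N''$ with $N'' \mid N'$ squarefree, so satisfies the hypothesis) we have
\[
Q_k(N_0) + \mathcal{S}_k^{\mathrm{old}}(N_0) + \mathcal{E}_k(N_0) \;=\; \mathcal{S}_k(N_0) + \mathcal{E}_k(N_0) \;=\; \mathcal{M}_k(N_0).
\]

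Next I would prove, by strong induction on $N_0$ running through the divisors of $N$, the statement
\[
B_d\bigl(\mathcal{S}_k^{\mathrm{new}}(N_0)\bigr) \;\subseteq\; \mathcal{Q}_k(N) + \mathcal{E}_k(N) \qquad \text{for every } d \mid N/N_0.
\]
Take any $g \in \mathcal{S}_k^{\mathrm{new}}(N_0)$; by the projection statement above there exists $h \in Q_k(N_0)$ with $h - g \in \mathcal{S}_k^{\mathrm{old}}(N_0) + \mathcal{E}_k(N_0)$. Applying $B_d$ gives $h\vert B_d \in \mathcal{Q}_k(N)$ directly from the definition $\mathcal{Q}_k(N) = \bigcup_{N_0 d \mid N} Q_k(N_0)\vert B_d$. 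The Eisenstein piece $e \in \mathcal{E}_k(N_0)$ lifts to $e \vert B_d \in \mathcal{E}_k(N)$. Finally the old-part of $h - g$ is a sum of lifts $B_{d'} g'$ with $g' \in \mathcal{S}_k^{\mathrm{new}}(M_0)$ for proper divisors $M_0 \mid N_0$ and $d' \mid N_0/M_0$; applying $B_d$ gives terms of the form $B_{dd'} g'$, and the inductive hypothesis (at the smaller level $M_0$) puts each such term into $\mathcal{Q}_k(N) + \mathcal{E}_k(N)$. Rearranging yields $g \vert B_d \in \mathcal{Q}_k(N) + \mathcal{E}_k(N)$.

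Finally, using the standard Atkin--Lehner decomposition $\mathcal{S}_k(N) = \bigoplus_{N_0 \mid N} \bigoplus_{d \mid N/N_0} B_d\bigl(\mathcal{S}_k^{\mathrm{new}}(N_0)\bigr)$, the inductive conclusion shows $\mathcal{S}_k(N) \subseteq \mathcal{Q}_k(N) + \mathcal{E}_k(N)$, and adding the Eisenstein part gives the desired equality. The reverse inclusion is immediate from the construction of $\mathcal{Q}_k(N)$.

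The only genuinely subtle point is the bookkeeping in the inductive step: one needs that each ingredient $B_d Q_k(N_0)$ appearing after the projection sits inside the globally defined space $\mathcal{Q}_k(N)$, but this is exactly how $\mathcal{Q}_k(N)$ was set up. No new analytic input is required beyond Theorem \ref{thm:prods}; the whole argument is a formal lift-and-induct procedure, and the restriction $N = p^a q^b N'$ enters only through the inductive appeal to Theorem \ref{thm:prods}, which is preserved under taking divisors.
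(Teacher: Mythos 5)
Your proposal is correct and follows essentially the same route as the paper: Theorem \ref{thm:prods} plus the Section \ref{sctn:new-part} computation that $P_k(N_0)$ and $Q_k(N_0)$ have the same new-space projection, combined with an induction over divisors via the decomposition $\mathcal{S}_k(N) = \bigoplus_{N_0 \mid N}\bigoplus_{d \mid N/N_0}\mathcal{S}^{\mathrm{new}}_k(N_0)|B_d$; your write-up simply makes explicit the lift-and-induct bookkeeping that the paper leaves implicit.
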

\begin{proof}
This follows from Theorem \ref{thm:prods}, the previous calculations, and an inductive argument using the fact that
\[\mathcal{S}_k(N) = \bigoplus_{N_0 | N}\bigoplus_{d\mid N/N_0}\mathcal{S}^{\mathrm{new}}_k(N_0)|B_d.\]\end{proof}

To treat the case $k=2$ we need one more result.

\begin{prop}\label{prop:oldforms-orthogonal-to-prods}
Let $f\in\mathcal{S}_2^{\mathrm{new}}(N_0)$ be a newform of level $N_0\mid N$ with $L(f,1)=0$, and let $d$ be such that $dN_0\mid N$. Then $f| B_d$ is orthogonal to $P_2(N)$.
\end{prop}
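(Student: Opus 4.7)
My plan is to adapt the Rankin--Selberg computation from the proof of Theorem \ref{thm:prods} (the $k=2$ case) to allow an oldform on the cusp-form side of the Petersson inner product. Fix a generator $P = (E_1^{\textbf{1},\alpha}E_1^{\textbf{1},\overline{\alpha_N}})|W_S^N$ of $P_2(N)$, with $\alpha$ a primitive odd character. Since the product $E_1^{\textbf{1},\alpha}E_1^{\textbf{1},\overline{\alpha_N}}$ lies in $\mathcal{M}_2(N)$ with principal character, Proposition \ref{prop:al}(iv) gives $W_S^{N,*}=W_S^N$, so
\[
\langle P, f|B_d\rangle = \langle E_1^{\textbf{1},\alpha}E_1^{\textbf{1},\overline{\alpha_N}}, (f|B_d)|W_S^N\rangle.
\]
I would then apply the Rankin--Selberg unfolding \eqref{eqn:rankin--selberg-unfolded} with $g=E_1^{\textbf{1},\alpha}$ and $\psi=\overline{\alpha_N}$; this identity is derived by unfolding the Eisenstein series and is valid for any cusp form on the right. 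Setting $s=0$, $k=2$, $l=1$ rewrites the inner product as a non-zero constant times
\[
\sum_{n\geq 1}\frac{\overline{c_n}\,\sigma_{0,\textbf{1},\alpha}(n)}{n},
\]
where $\{c_n\}$ are the Fourier coefficients of $h:=(f|B_d)|W_S^N$.

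The key observation is that $h$ lies in the $f$-isotypic Hecke component $V_f := \mathrm{span}\{f|B_{d'} : d'\mid N/N_0\}\subset\mathcal{S}_2(N)$. Indeed, both $B_d$ and $W_S^N$ commute with the Hecke operators $T_p$ for $p\nmid N$ (up to a character in the case of $W_S^N$, which is trivial here), so $h$ is a simultaneous eigenform for this sub-algebra with the same eigenvalues as $f$; by strong multiplicity one this forces $h\in V_f$. Writing $h=\sum_{d'\mid N/N_0}\lambda_{d'}\,f|B_{d'}$, the Fourier coefficients expand as $c_n=\sum_{d'\mid n}\lambda_{d'}\,d'\,a_{n/d'}$, where $\{a_m\}$ are the Fourier coefficients of $f$. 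Substituting and interchanging summation yields
\[
\sum_{n\geq 1}\frac{\overline{c_n}\,\sigma_{0,\textbf{1},\alpha}(n)}{n} \;=\; \sum_{d'\mid N/N_0}\overline{\lambda_{d'}}\sum_{m\geq 1}\frac{\overline{a_m}\,\sigma_{0,\textbf{1},\alpha}(d'm)}{m}.
\]

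Finally, each inner Dirichlet series is evaluated using the Hecke multiplicativity of $\{a_m\}$ and the multiplicativity of $\sigma_{0,\textbf{1},\alpha}$, exactly as in the proof of Corollary \ref{cor:inner-prod-as-l-values} (cf.\ Proposition 4.1 of \cite{Raum2016}). This produces a factorization of the shape
\[
\frac{L(f,1)\,L(f_{\alpha},1)}{L(\alpha,2)}\cdot Q_{d',N,\alpha}(1),
\]
where $Q_{d',N,\alpha}(s)$ is a finite product of local Euler factors over primes dividing $Nd'$, which is finite at $s=1$. Since $L(f,1)=0$ by hypothesis, every term in the sum over $d'$ vanishes, so the entire inner product is zero. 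As this holds for each generator $P$ of $P_2(N)$, we conclude that $f|B_d$ is orthogonal to $P_2(N)$.

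The main obstacle will be the careful local analysis of $Q_{d',N,\alpha}(s)$ at $s=1$: one has to verify that no local factor introduces a pole cancelling the vanishing of $L(f,1)$. This splits into the cases of primes dividing $N_0$ (where $f$ has a modified Euler factor coming from the level), primes dividing $d'$ but not $N_0$ (where $f|B_{d'}$ has trivial local Euler factor), and primes dividing the conductor $M$ of $\alpha$ (where the identity $\sigma_{0,\textbf{1},\alpha}(p^a)=1$ simplifies matters). An alternative route that avoids writing $h$ explicitly in the oldform basis is to apply Lemma \ref{lem:al-lift-swap} (suitably extended to handle $f$ viewed at the higher level $N$) together with the fact that $f$ is an eigenfunction of $W_p^{N_0}$ for every $p\mid N_0$, to express $h$ directly as a scalar multiple of $f|B_{d'}$ for a single $d'$; the Dirichlet series computation then reduces to the newform case of Corollary \ref{cor:inner-prod-as-l-values} at level $N_0$.
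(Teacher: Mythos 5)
Your proposal is correct and takes essentially the same route as the paper: move $W_S^N$ across by self-adjointness, identify $(f|B_d)|W_S^N$ inside the span of the $f|B_{d'}$ (the paper does this more directly via Lemma \ref{lem:al-lift-swap} and the Atkin--Lehner eigenvalue of the newform $f$, obtaining a single multiple of one $f|B_{d'}$ rather than invoking multiplicity one), unfold the Rankin--Selberg integral, and express the resulting Dirichlet series as $L(f,s+1)L(f_\alpha,s+1)$ divided by $L(\alpha,2s+1)$ times finitely many local factors, so that $L(f,1)=0$ kills everything. The ``local analysis'' you defer is exactly what the paper's proof carries out (holomorphy and nonvanishing of the correction Euler factors and of the local sums at primes dividing $d'$ at $s=0$); note also a small slip: the denominator is $L(\alpha,2s+1)$, i.e.\ $L(\alpha,1)$ at $s=0$, not $L(\alpha,2)$, though this is harmless since $\alpha$ is primitive and non-trivial.
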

\begin{proof}
It suffices to show that $f| B_d$ is orthogonal to each of the generators of $P_2(N)$, so we fix a product
$
(E_1^{\textbf{1},\alpha} E_1^{\mathbf{1}, \overline{\alpha_N}}) | W_S^N
$
where $\alpha$ is a primitive odd character modulo $M \mid N$ and $S \subset T$ is a subset of the primes $p$ with $v_p(N)=1$. Since $W_S^N$ is self-adjoint,
\[
\langle (E_1^{\textbf{1},\alpha} E_1^{\mathbf{1}, \overline{\alpha_N}}) | W_S^N,f|B_d\rangle=
\langle E_1^{\textbf{1},\alpha} E_1^{\mathbf{1}, \overline{\alpha_N}},f|B_d | W_S^N).
\]
Using Lemma \ref{lem:al-lift-swap} and the fact that $f$ is an eigenfunction of all $W_{S'}^M$ for sets $S'\subseteq T$ of prime divisors of $N_0$, we see that $f|B_d | W_S^N$ is a multiple of $f|B_{d'}$ for some $d'|d$.  Arguing as in Proposition \ref{prop:inner-prod-as-l-values}, 
\[
\langle E_1^{\textbf{1},\alpha} E_1^{\mathbf{1}, \overline{\alpha_N}}(\cdot, s), f|B_{d'} \rangle = \frac{\Gamma(s + 1)}{d'^{s+1}(4 \pi)^{s + 1} } \sum_{n \geq 1} \frac{a_n \sigma_{1,\textbf{1},\alpha}(d'n)}{n^{s + 1}},
\]
where $a_n$ are the Fourier coefficients of $f$ (note that $f^c = f$ in this case), and $\Re ~ s \gg 0$.  Let $d'=\prod p^{e_p}$. Then
\begin{align}
\sum_{n \geq 1} \frac{a_n \sigma_{1,\textbf{1},\alpha}(d'n)}{n^{s + 1}}=\sum_{\gcd(n,d')=1}\frac{a_n \sigma_{1,\textbf{1},\alpha}(n)}{n^{s + 1}}\prod_{p\mid d'}\left(\sum_{b=0}^{\infty}\frac{a_{p^b} \sigma_{1,\textbf{1},\alpha}(p^{b+e_p})}{(p^{b})^{s + 1}}\right).
\end{align}

The first sum over $n$ coprime to $d'$ is, up to the Euler factors corresponding to the prime divisors of $d'$, given in the proof of Proposition \ref{prop:inner-prod-as-l-values}:
 \[\sum_{(n,d')=1} \frac{a_n \sigma_{0, \mathbf{1}, \alpha}(n)}{n^{s+1}} = \frac{L(f, s+1) L(f_{\alpha}, s+1)}{L(\alpha, 2s + 1)}\prod_{p|d'} \frac{ L_p(\alpha, 2s + 1)}{L_p(f, s+1) L_p(f_{\alpha}, s+1)}
,\]
where the Euler factors are $L_p(\alpha,2s+1) =(1-\alpha(p)p^{-(2s+1)})^{-1} $, $L_p(f,s+1)=(1-a_p p^{-(s+1)}+p^{-1-2(s+1)})^{-1}$, and $L_p(f_{\alpha},s+1)=(1- \alpha(p)a_p p^{-(s+1)}+\alpha(p)^2 p^{-1-2(s+1)})^{-1}$. Noting that the coefficients $a_p$ are algebraic integers because $f$ is a newform, we see that all the Euler factors are holomorphic at $s=0$ and do not vanish there. The same is true for $L(\alpha,2s+1)$, as $\alpha$ is primitive and non-trivial. Since we assume $L(f,1)=0$ the sum $\sum_{(n,d')=1} a_n \sigma_{0, \mathbf{1}, \alpha}(n)n^{-(s+1)}$ vanishes at $s=0$.

It remains to show that the sums 
\[
f_p(s)=\sum_{b=0}^{\infty}\frac{a_{p^b} \sigma_{1,\textbf{1},\alpha}(p^{b+e_p}) }{(p^{b})^{s + 1}}=
\sum_{b=0}^{\infty}\frac{a_{p^b} \sigma_{1,\textbf{1},\alpha}(p^{b})}{(p^{b})^{s + 1}}+\sum_{b=0}^{\infty}\frac{a_{p^b} \alpha(p^b)p^b}{(p^{b})^{s+1}}(\alpha(p)p+\ldots+\alpha(p^{e_p})p^{e_p})
\]
can be analytically continued to $s=0$ for all $p$ dividing $d'$. The first sum corresponds to the Euler factors at $p$ of the quotient of $L$-functions given in Proposition \ref{prop:inner-prod-as-l-values} and hence has analytic continuation to $s=0$. The second sum equals
\[
(\alpha(p)p+\ldots+\alpha(p^{e_p})p^{e_p})L_p(f_\alpha,s),
\]
which can again be analytically continued to $s=0$.
\end{proof}
We now define a space $\mathcal{S}_{2,\mathrm{rk}=0}(N)$ that projects onto $\mathcal{S}^{\mathrm{new}}_{2,\mathrm{rk}=0}(N)$ from Theorem \ref{thm:prods}, by
\[\mathcal{S}_{2,\mathrm{rk}=0}(N) = \bigoplus_{N_0 | N}\bigoplus_{d\mid N/N_0}\mathcal{S}^{\mathrm{new}}_{2,\mathrm{rk}=0}(N_0)|B_d.\]
By Proposition \ref{prop:oldforms-orthogonal-to-prods} $P_2(N)$ is contained in $\mathcal{S}_{2,\mathrm{rk}=0}(N)$ and by Theorem \ref{thm:prods-full-space} the two spaces have the same projection to the new space of $\mathcal{S}_2(N)$. This projection is equal to the projection of $Q_2(N)$ and so we can again use induction to prove:

\begin{thm}\label{thm:prods-full-space-weight-2}
Let $N$ be as in Theorem \ref{thm:eichler-shimura-paqb} and let $\mathcal{Q}_2(N) = \cup_{N_0 d \mid N} Q_2(N_0) | B_d$ be the subspace of $\mathcal{M}_2(N)$ generated by the products
\[
E_{1}^{\phi,\psi}|B_{d_1d}\cdot E_{1}^{\overline{\phi},\overline{\psi}}|B_{d_2d}
\]
for all $(\phi, \psi,1,d_1, d_2) \in B'(N_0)$. Then
\[
\mathcal{S}_{2,\mathrm{rk}=0}(N)+\mathcal{E}_2(N) = \mathcal{Q}_2(N)+\mathcal{E}_2(N).
\]
\end{thm}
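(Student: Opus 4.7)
The plan is to prove this by strong induction on $N$ with respect to divisibility, closely following the structure of the proof of Theorem \ref{thm:prods-full-space}. The base case $N=1$ is vacuous, so assume the statement holds for every proper divisor of $N$. The key ingredient is the direct sum decomposition
\[\mathcal{S}_{2,\mathrm{rk}=0}(N) = \bigoplus_{N_0 \mid N}\bigoplus_{d \mid N/N_0}\mathcal{S}_{2,\mathrm{rk}=0}^{\mathrm{new}}(N_0)|B_d,\]
which parallels the decomposition of $\mathcal{S}_2(N)$ into lifts of newforms.

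For the inclusion $\mathcal{Q}_2(N)+\mathcal{E}_2(N) \subseteq \mathcal{S}_{2,\mathrm{rk}=0}(N)+\mathcal{E}_2(N)$, take a typical generator $g|B_d$ with $g \in Q_2(N_0)$ and $N_0d \mid N$, and decompose $g = g^{\mathrm{new}}+g^{\mathrm{old}}+g^{\mathrm{Eis}}$ in $\mathcal{M}_2(N_0)$. Theorem \ref{thm:prods} places $g^{\mathrm{new}}$ in $\mathcal{S}_{2,\mathrm{rk}=0}^{\mathrm{new}}(N_0)$. The crucial step is to show $g^{\mathrm{old}} \in \mathcal{S}_{2,\mathrm{rk}=0}^{\mathrm{old}}(N_0)$, for which I would establish the analogue of Proposition \ref{prop:oldforms-orthogonal-to-prods} with $Q_2(N_0)$ in place of $P_2(N_0)$: every generator $E_1^{\phi,\psi}|B_{d_1}\cdot E_1^{\overline\phi,\overline\psi}|B_{d_2}$ is orthogonal to any $f|B_{d'}$ where $f$ is a newform of level dividing $N_0$ with $L(f,1)=0$. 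I would prove this by unfolding the Rankin--Selberg integral against one of the lifted Eisenstein factors, exactly as in Proposition \ref{prop:oldforms-orthogonal-to-prods}; the $B$-operators on the Eisenstein side introduce only finitely many extra Euler factors at the primes dividing $d_1 d_2 d'$, all analytic and non-vanishing at $s=1$, so the factor of $L(f,1)$ still forces the integral to vanish. Consequently $g \in \mathcal{S}_{2,\mathrm{rk}=0}(N_0)+\mathcal{E}_2(N_0)$, and applying $B_d$ lifts this inclusion to level $N$ by compatibility with the direct sum decomposition.

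For the reverse inclusion, by the decomposition above it suffices to prove that $\mathcal{S}_{2,\mathrm{rk}=0}^{\mathrm{new}}(N_0)|B_d \subseteq \mathcal{Q}_2(N)+\mathcal{E}_2(N)$ for each $N_0 d \mid N$. When $N_0 < N$ this follows immediately from the inductive hypothesis at level $N_0$ after applying $B_d$. When $N_0 = N$ (so $d=1$), Theorem \ref{thm:prods} provides, for any $f \in \mathcal{S}_{2,\mathrm{rk}=0}^{\mathrm{new}}(N)$, a form $g \in Q_2(N) \subseteq \mathcal{Q}_2(N)$ whose projection to $\mathcal{S}_2^{\mathrm{new}}(N)$ equals $f$. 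Then $g-f$ lies in $\mathcal{S}_2^{\mathrm{old}}(N)+\mathcal{E}_2(N)$, while by the first inclusion $g$ itself lies in $\mathcal{S}_{2,\mathrm{rk}=0}(N)+\mathcal{E}_2(N)$, so the old cuspidal part of $g-f$ lies in $\mathcal{S}_{2,\mathrm{rk}=0}^{\mathrm{old}}(N)$, already covered by the $N_0 < N$ case. Hence $f \in \mathcal{Q}_2(N)+\mathcal{E}_2(N)$.

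The only genuinely new ingredient is the Rankin--Selberg analogue of Proposition \ref{prop:oldforms-orthogonal-to-prods} for the generators of $Q_2(N_0)$, which carry no Atkin--Lehner operator that could be moved across the inner product by self-adjointness as in the original proof. Performing the unfolding directly against a lifted Eisenstein factor, and keeping track of the handful of extra Euler factors at the primes dividing $d_1$, $d_2$, and $d'$, is the main obstacle; everything else is a bookkeeping exercise compatible with the $B_d$ lifts.
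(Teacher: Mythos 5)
Your overall architecture (induction on the level via the decomposition $\mathcal{S}_{2,\mathrm{rk}=0}(N)=\bigoplus_{N_0\mid N}\bigoplus_{d\mid N/N_0}\mathcal{S}^{\mathrm{new}}_{2,\mathrm{rk}=0}(N_0)|B_d$, Theorem \ref{thm:prods} plus the Section \ref{sctn:new-part} projection identity for the new part, induction for the old part) is the same as the paper's. The gap is exactly in the step you yourself flag as crucial: the claimed analogue of Proposition \ref{prop:oldforms-orthogonal-to-prods} for the generators of $Q_2(N_0)$, proved by unfolding ``exactly as in'' that proposition. This does not go through. When $M_1>1$, i.e.\ $\phi\neq\mathbf{1}$, neither factor $E_1^{\phi,\psi}|B_{d_1}$ nor $E_1^{\overline{\phi},\overline{\psi}}|B_{d_2}$ is an unfoldable Poincar\'e series over $\Gamma_\infty\backslash\Gamma_0(N)$: the paper's Poincar\'e-series expression holds only in the special case of trivial first character (and, in Proposition \ref{prop:oldforms-orthogonal-to-prods}, only for the full-level series $E_1^{\mathbf{1},\overline{\alpha_N}}$ with no $B$-lift). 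Moreover, even granting some unfolding, the resulting Dirichlet series is of the shape $\sum_n a_n\sigma_{0,\phi,\psi}(n)n^{-s}$, which for a newform factors into \emph{twisted} $L$-values of the type $L(f_\phi,\cdot)L(f_\psi,\cdot)$; the untwisted factor $L(f,1)$, which is the whole vanishing mechanism, simply does not appear, so ``only finitely many extra Euler factors'' is not the issue and the claimed conclusion fails as stated.

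The orthogonality statement itself is true, but the paper obtains it indirectly, and this is why it works with $P_2(N)$ rather than $Q_2(N)$ in Proposition \ref{prop:oldforms-orthogonal-to-prods}: the generators of $P_2(N)$ are $W_S^N$-images of products whose second factor is $E_1^{\mathbf{1},\overline{\alpha_N}}$ at full level; self-adjointness of $W_S^N$ moves the Atkin--Lehner operator onto $f|B_d$, Lemma \ref{lem:al-lift-swap} together with the fact that newforms are $W$-eigenforms turns $f|B_d|W_S^N$ into a multiple of some $f|B_{d'}$, and only then does the unfolding produce the genuine factor $L(f,1)$. The containment of $\mathcal{Q}_2(N)$ in $\mathcal{S}_{2,\mathrm{rk}=0}(N)+\mathcal{E}_2(N)$ is then deduced from the Section \ref{sctn:new-part} computation expressing each $Q$-generator as a $P$-generator minus $W_S^N$-images of products of strictly smaller level, together with the stability of $\mathcal{S}_{2,\mathrm{rk}=0}(N)$ under $W_S^N$ and under $B_d$-lifts, and the induction on the level --- not from a fresh Rankin--Selberg computation for both characters nontrivial. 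To repair your write-up you should either reproduce this detour through $P_2(N)$, or carry out the unfolding only after applying an Atkin--Lehner operator to bring the Eisenstein factor into the form $E_1^{\mathbf{1},\ast}$, at which point the $L(f,1)$ factor reappears via the Atkin--Lehner image of $f|B_{d'}$.
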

Let $B$ be a basis of $\mathcal{S}_2(N)$ consisting of newforms of the form $f_i(d z)$, where $f_i$ is a newform for level $M_i|N$ and $d$ a divisor of $N/M_i$. Let $f$ be a cusp form that is given as a linear combination $f(z) = \sum_i \sum_{d|N/M_i} a_{i,d} f_i(dz)$. If $N$ is as in \ref{thm:eichler-shimura-paqb}, then Theorem \ref{thm:prods-full-space-weight-2} states that $f$ can be written as a linear combination of products of Eisenstein series in $\mathcal{Q}_2(N)$ and Eisenstein series in $\mathcal{E}_2(N)$ if and only if $a_{i,d}=0$ for all $i$ with $L(f_i,1)=0$.

\section{Fourier expansions at arbitrary cusps}\label{scn:Fourier expansions}

Let $f\in \mathcal{M}_k(\Gamma_0(N))$ and $\alpha = \frac{a}{c}\in\Q\cup\{\infty\}$ with $(a,c)=1$. In this section we discuss how a representation of $f$ as a linear combination of products of Eisenstein series can be used to obtain a Fourier expansion of $f$ at $\alpha$. The method described here applies to modular forms for any congruence subgroup but we focus on $\Gamma_0(N)$ because our main theorem \ref{thm:prods-full-space} applies to $\Gamma_0(N)$. Choose a matrix $\gamma\in\SL_2(\Z)$ that maps $\infty$ to $\alpha$. The form $f|\gamma$ is invariant under $T^w$, where $w = \frac{N}{\gcd(c^2,N)}$ is the width of the cusp $\alpha$. Hence $f|\gamma$ has a Fourier expansion in $q_w = e^{\frac{2\pi i\tau}{w}}$:
\[
f|\gamma(\tau) = \sum_{n=0}^\infty a_n^\gamma(f)q_w^n.
\]
We will call this a \textit{Fourier expansion of $f$ at the cusp $\alpha$}. Note that the coefficients $a_n^\gamma(f)$ depend on the choice of $\gamma$. Indeed, if $\gamma\infty = \alpha$, then also $\gamma T^m \infty = \alpha$, and
$a_n^{\gamma T^m}(f) = \zeta_w^{nm}a_n^\gamma(f)$, where $\zeta_w$ is a primitive $w$-th root of unity.

Now assume we have a representation of $f$ as a linear combination of Eisenstein series and products of two Eisenstein series. By Theorem \ref{thm:prods-full-space} and Theorem \ref{thm:prods-full-space-weight-2}, for $N = p^aq^b N'$ for squarefree $N'$ we can find such an expansion with products of the form
$
E_{l}^{\phi,\psi}|B_{d_1d}\cdot E_{k-l}^{\overline{\phi},\overline{\psi}}|B_{d_2d},
$
as long as $f$ is in $\mathcal{S}_{2,\mathrm{rk}=0}(N)+\mathcal{E}_2(N)$ if $k=2$. Since the slash operator $|\gamma$ is linear and satisfies
\[
(E_{l}^{\phi,\psi}|B_{d_1d}\cdot E_{k-l}^{\overline{\phi},\overline{\psi}}|B_{d_2d})|\gamma =
E_{l}^{\phi,\psi}|B_{d_1d}\gamma\cdot E_{k-l}^{\overline{\phi},\overline{\psi}}|B_{d_2d}\gamma,
\]
the problem of finding the Fourier expansion of $f|\gamma$ reduces to the problem of finding the expansion of $E_k^{\phi,\psi}|B_d\gamma$ for primitive characters $\phi,\psi$ and $d\in\mathbb{N}$. This is discussed in the second chapter of \cite{Weisinger1977}, although there are several mistakes in the formulas. A corrected version was communicated to the authors by Henri Cohen \cite{Cohen2017unpublished}.

We have implemented the above method of finding Fourier expansions of modular forms at arbitrary cusps in Sage \cite{sage} and present several examples of Fourier expansions below. Our program is available at \cite{Github_mneururer}. In our examples we will calculate Atkin-Lehner eigenvalues and the expansions at cusps of the form $\frac{1}{d}$ for $d|N$, although the expansions at other cusps can be obtained by the same method.
A matrix that maps $\infty$ to $\frac{1}{d}$ is given by $\gamma_d=\sabcd{1}{0}{d}{1}$.
\begin{enumerate}
\item
Let $f_{49}$ be the unique newform of level $49$ and weight $2$. It is a linear combination of the products $E_1^{\textbf{1},\phi}E_1^{\textbf{1},\overline{\phi}}$ and $E_1^{\textbf{1},\phi^3}E_1^{\textbf{1},\overline{\phi}^3}$, where $\phi$ is the character modulo $49$ that maps $3$ to $\zeta_{42}$:
\begin{multline*}
f_{49} = \frac{1}{28}(-20 \zeta_{21}^{11} + 5 \zeta_{21}^{10} + \zeta_{21}^{9} - 19 \zeta_{21}^{8} + 10 \zeta_{21}^{7} + 9 \zeta_{21}^{6} - 11 \zeta_{21}^{5} - 5 \zeta_{21}^{4} + 15 \zeta_{21}^{3} - 9 \zeta_{21}^{2} + \zeta_{21} + 21
)E_1^{\textbf{1},\phi}E_1^{\textbf{1},\overline{\phi}}\\
+\frac{1}{28}(4 \zeta_{21}^{11} - 2 \zeta_{21}^{10} + \zeta_{21}^{9} + 3 \zeta_{21}^{8} - 4 \zeta_{21}^{7} + \zeta_{21}^{6} + 3 \zeta_{21}^{5} - 2 \zeta_{21}^{4} - 2 \zeta_{21}^{3} + 5 \zeta_{21}^{2} - 5 \zeta_{21} - 6
)E_1^{\textbf{1},\phi^3}E_1^{\textbf{1},\overline{\phi}^3}.
\end{multline*} 
Using this we obtain the Fourier expansion of $f_{49}$ at $\alpha=0$.
\begin{equation}\label{eqn:f49 at 0}
f_{49}|S = \frac{1}{49}(-q_{49} - q_{49}^2 + q_{49}^4 + O(q_{49}^8)),
\end{equation}
We deduce that $f_{49}$ has $W_{49}$-eigenvalue $-1$. Indeed \eqref{eqn:f49 at 0} implies
\[
f_{49}|W_{49}(z) = f_{49}|SB_{49}(z) = 49 f_{49}|S(49z) = -q + O(q^2) = -f(z).
\]
We can see this already from the fact that $f_{49}$ has an expansion in terms of Eisenstein series and products of two Eisenstein series. According to Theorem \ref{thm:prods-full-space-weight-2} that implies that $L(f_{49},1)\neq 0$. Denoting the $W_{49}$-eigenvalue of $f_{49}$ by $\lambda$, the completed $L$-function $\Lambda(f_{49},s)$ of $f_{49}$ satisfies the functional equation 
\[
\Lambda(f_{49},s) = 7^s\Gamma(s)(2\pi)^{-s}L(f_{49},s) = -\lambda\Lambda(f_{49},2-s).
\]
So if $L(f_{49},1)\neq 0$ we must have $\lambda=-1$. The expansion at the cusp $1/7$ is given by
\begin{align*}
f_{49}|\gamma_7 = \frac{1}{7}\left((-2 \zeta_{7}^{5} - 4 \zeta_{7}^{4} - 6 \zeta_{7}^{3} - 8 \zeta_{7}^{2} - 3 \zeta_{7} - 5
)q + (6 \zeta_{7}^{5} - 2 \zeta_{7}^{4} + 4 \zeta_{7}^{3} + 3 \zeta_{7}^{2} + 2 \zeta_{7} + 1)q^2 + O(q^3)\right)
\end{align*}
\item $N = 8, ~k=16$. There are two rational newforms in $\mathcal{S}_{16}(8)$,
\[
f_{16,1} = q - 3444 q^3 + 313358q^5 +O(q^7)\text{ and }
f_{16,2} = q + 2700 q^3 - 251890q^5+O(q^7).
\]
Using products of Eisenstein series we find that both have $W_8$-eigenvalue $-1$. The expansions at at cusps of $\Gamma_0(8)$ other than $0$ and $\infty$ are
\begin{align}
f_1|\gamma_2 &=\frac{i}{256}\left(q_2 +3444q_2^3 + 313358 q_2^5
 +O(q_2^7)\right),~f_1|\gamma_4 = -f_1,\\
 f_2|\gamma_2 &=\frac{i}{256}\left(q_2 -2700 q_2^3-251890 q_2^5
 +O(q_2^7)\right),~f_2|\gamma_4 = -f_2.
\end{align}
We see that $f_1|\gamma_2B_2 = if_{1,\chi_4}$ and $f_2|\gamma_2B_2 = if_{2,\chi_4}$, where $\chi_4$ is the primitive character of conductor $4$.

\item $N= 36,~ k=8$. There is one newform $f_{36} = q - 270q^5 + O(q^6)\in
\mathcal{S}_8(36)$. The Atkin-Lehner operators are $W_{\{2\}}= \sabcd{1}{1}{-9}{-8}B_4$ and $W_{\{3\}} = \sabcd{1}{1}{8}{9}B_9$. 
\[
f|W_{36} = f_{36},~ f_{36}|W_{\{2\}} = -f_{36},~ f_{36}|W_{\{3\}} = -f_{36}.
\]
\item $N=3^5 = 243,~k=4$. We find the first few coefficients for $f_{243} = 
q -3q^2 +q^4+ 3q^5 -10q^7 + 21q^8 + O(q^{10})$ at the cusps $\frac13, \frac19$, $\frac{1}{27}$, and $\frac{1}{81}$.
\begin{align*}
f_{243}|\gamma_3 &= \frac{1}{729}\left((-\zeta_{162}^{29} + \zeta_{162}^{2})q_{27} -3 \zeta_{162}^{31} q_{27}^2 + (-\zeta_{162}^{35} + \zeta_{162}^{8})q_{27}^4 + 3 \zeta_{162}^{37} q_{27}^5
+ O(q_{27}^{6})\right),\\
f_{243}|\gamma_9 &= \frac19\left((\zeta_{54}^{14} - \zeta_{54}^{5})q_3
+3(- \zeta_{54}^{10} +  \zeta_{54})q_3^2
+\zeta_{54}^{11} q_3^4 
+3 \zeta_{54}^{7}q_3^5 + O(q_3^6)\right),\\
f_{243}|\gamma_{27} &=\zeta_{9}q - 3 \zeta_{9}^{2}q^{2} + \zeta_{9}^{4}q^{4} + 3 \zeta_{9}^{5}q^{5}+O(q^6),\\
f_{243}|\gamma_{81} &= -(\zeta_{3} + 1)q -3 \zeta_{3}q^2 -(\zeta_{3} + 1)q^4 +3 \zeta_{3} q^5+ O(q^6).
\end{align*}
\end{enumerate}

\textbf{Acknowledgements.}  We thank Andrew Booker for asking the off-the-cuff question that led us to study this problem, and for helpful remarks. We are grateful to Fredrik Str\"omberg for many remarks and corrections, Christian Wuthrich for discussions on Fourier coefficients of newforms at cusps, and Henri Cohen for providing us with formulas for the Fourier coefficients of Eisenstein series at cusps. 
We further thank Nikos Diamantis, Winfried Kohnen, and Martin Raum for their comments on the paper.
The authors were funded by the EPSRC grants EP/M016838/1 ``Arithmetic of hyperelliptic curves" and EP/N007360/1 ``Explicit methods for Jacobi forms over number fields" respectively.

\bibliographystyle{plain}
\bibliography{refs}

\end{document}